\documentclass[12pt, reqno]{amsart}

\usepackage[T1]{fontenc}
\usepackage[utf8]{inputenc}
\usepackage{geometry}
\usepackage{amsmath,amssymb,amsthm,mathtools, float}
\usepackage{bm}
\usepackage[foot]{amsaddr}
\usepackage{booktabs}
\usepackage[colorlinks=true,linkcolor=blue,citecolor=blue,urlcolor=blue]{hyperref}

\numberwithin{equation}{section}

\newtheorem{theorem}{Theorem}[section]
\newtheorem{lemma}[theorem]{Lemma}
\newtheorem{proposition}[theorem]{Proposition}
\newtheorem{corollary}[theorem]{Corollary}
\theoremstyle{definition}

\theoremstyle{remark}

\newcommand{\R}{\mathbb{R}}
\newcommand{\N}{\mathbb{N}}
\newcommand{\Z}{\mathbb{Z}}
\newcommand{\E}{\mathbb{E}}
\newcommand{\PP}{\mathbb{P}}
\newcommand{\F}{\mathcal{F}}
\newcommand{\G}{\mathcal{G}}
\newcommand{\calH}{\mathcal{H}}
\newcommand{\calM}{\mathcal{M}}
\newcommand{\calL}{\mathcal{L}}
\newcommand{\TV}{\mathrm{TV}}
\newcommand{\Var}{\operatorname{Var}}
\newcommand{\Unif}{\operatorname{Unif}}
\newcommand{\1}{\mathbf{1}}
\usepackage[backend=biber,style=numeric,sorting=nyt]{biblatex}
\makeatletter
\renewcommand{\email}[2][]{%
  \ifx\emails\@empty\relax\else{\g@addto@macro\emails{,\space}}\fi%
  \@ifnotempty{#1}{\g@addto@macro\emails{\textrm{(#1)}\space}}%
  \g@addto@macro\emails{#2}%
}

\title{Coordinate-wise Elephant Random Walk}
\author{Denisse Alejandra Escobar Parra}
\subjclass[2020]{60G50, 60J10, 60F05, 60F17}
\address{Department of Statistics and Applied Probability, University of California, Santa Barbara}
\email{descobarparra@ucsb.edu}

\begin{document}

\begin{abstract}
We introduce a coordinate-wise version of the elephant random walk on the $k$-dimensional discrete hypercube $Q_k=\{-1,1\}^k$. At each global time step, one coordinate is selected uniformly at random and updated according to an elephant-type memory rule using only the past values of that coordinate. The resulting process is a nearest-neighbor walk with possible holding on the hypercube, but it is not Markovian on $Q_k$ because the transition probabilities depend on coordinate-wise empirical histories. We show that, when all memory parameters satisfy $p_i<1$, the coordinate-wise memory biases vanish almost surely. Consequently, the time-dependent transition kernels of the walk are asymptotically close in total variation to the memoryless coordinate-refresh kernel. Using this perturbation argument and the uniform ergodicity of the refresh chain, we prove a weak law of large numbers for bounded observables. We then study the Doob martingale associated with a bounded observable and prove a martingale central limit theorem and functional central limit theorem. The limiting variance is determined by the refresh kernel and the uniform measure on the hypercube, and is completely independent of the memory parameters $p_1,p_2,\dots,p_k.$ 
\end{abstract}

\maketitle
\section{Introduction}
The Elephant Random Walk (ERW), introduced by Sch\"utz and Trimper \cite{Schutz2004}, is a discrete-time process on $\Z$ with complete long-range memory: at each step the walker selects a past step uniformly at random and repeats it with probability $p$ or reverses it with probability $1-p.$ Despite its simple definition, the ERW exhibits a sharp behavioral trichotomy: diffusive for $p<\frac{3}{4},$ critical at $p=\frac{3}{4}$ and superdiffusive for $p>\frac{3}{4}.$

The rigorous mathematical analysis of the ERW has been advanced through several complementary approaches. Coletti, Gava and Sch\"utz \cite{Coletti2017} established central limit theorems in the diffusive and critical regimes and showed that in the superdiffusive case the rescaled position converges to a non-degenerate random variable. Bercu \cite{Bercu2018} identified a natural martingale in the model and used it to derive convergence rates, central limit theorems and laws of iterated logarithm; this martingale perspective has since become a standard tool. Baur and Bertoin \cite{Baur2016} connected the ERW to P\'olya-type urns, placing it within the broader theory of reinforcement processes and proving functional central limit theorems. More recently, Tokumitsu \cite{Tokumitsu2026} established stable functional central limit theorems in all three regimes. A structural feature shared across all of these works is that memory acts by sampling from the full history of past increments.

The multi-dimensional setting has been studied by Bercu and Laulin \cite{BercuLaulin2019} who extended the martingale approach to walks on $\Z^k$ where memory acts on full vector-valued increments and by  Bertenghi \cite{Bertenghi2022}, who obtained functional limit theorems via P\'olya urns. Geometric extensions appear in \cite{Shibata2025}, who treated general periodic structures, and Chaudhuri \cite{Chaudhuri2026}, who studied the triangular lattice. Further variants are analyzed in \cite{LaulinMallein2025} and  \cite{GhoshDhillonKataria2026}.

The present paper introduces a qualitatively different variant: the {\it Coordinate-wise Elephant Random Walk} (CERW) on the discrete hypercube $Q_k=\{-1,1\}^k.$ The key novelty is that memory acts {\it independently per coordinate:} at each step a coordinate $i$ is chosen uniformly at random, and its new value is drawn from the past values of coordinate $i$ alone, then repeated or reversed with probability $p_i.$ All other coordinates remain unchanged. This distinguishes the CERW from all previous multi-dimensional ERWs in two essential ways. First, the space $Q_k$ is {\it finite,} so diffusive rescaling of the global positions is not possible. Second, the memory is {\it anisotropic:} each coordinate carries its own history, and the interaction between these coordinate-wise dynamics and the geometry of the hypercube is the central object of study. 

The analysis is carried out in three steps. First, we show that when $p_i<1$ for all $i,$ the empirical history bias $\widehat{m}_n^{(i)}$ of each coordinate converges to zero almost surely. Second, we compare the resulting long-time dynamics with the memoryless refresh kernel $K^{\star},$ in which each coordinate is independently set to $\pm1$ with equal probability. A key technical contribution is Lemma \ref{het_lem:finite-horizon}, which uses an inductive argument to handle the path-dependence of the random kernels. The comparison yields a weak law of large numbers: for every bounded function $f:Q_k\to\R$, the time average $n^{-1}\sum_{t}f(S_t)$ converges in probability to $\pi(f),$ the expectation under the uniform measure $\pi.$ Third, we study the Doob martingale $M_n^{(f)}$ associated with $f.$ Its predictable quadratic variation satisfies $n^{-1}\langle M^{(f)}\rangle_n\to\sigma^2_f:=\pi(v_f) $ in probability, as $n \to \infty$, and a martingale central limit theorem and functional CLT follow. 

A striking feature of all limit theorems is {\it universality of the limiting variance:} $\sigma_f^2$ depends only on the observable $f$ and the geometry of the hypercube through $K^{\star}$ and $\pi,$  and it is completely independent of the memory parameters $p_1,p_2,\dots, p_k.$ In the last section we compute $\sigma^2_f$ explicitly for eight natural observables on the hypercube: height, Hamming distance, parity, single coordinates, pair correlations, level indicators, vertex indicators and face indicators. 

The remainder of the paper is organized as follows. In Section \ref{sec:2}, we introduce the Coordinate-wise Elephant Random Walk (CERW) on the hypercube $Q_k$ and its coordinate-wise update rule. Section \ref{sec:3} studies the one-coordinate dynamics in isolation, showing via a stochastic approximation argument that each coordinate's empirical memory bias vanishes almost surely whenever $p_i<1.$ In Section \ref{sec:4}, we use this fact to compare the random transition kernel of the CERW with the memoryless refresh kernel $K^{\star},$  obtaining a total variation bound (Lemma \ref{het_lem:tv-bound}) and a finite-horizon comparison estimate (Lemma \ref{het_lem:finite-horizon}) that control the discrepancy between the two dynamics. Section \ref{sec:5} combines this perturbative comparison with the uniform ergodicity of $K^{\star}$ to establish a weak law of large numbers for bounded observables (Theorem \ref{het_thm:WLLN}). In Section \ref{sec:6}, we construct the Doob martingale associated with a bounded observable and prove a martingale central limit theorem and a functional central limit theorem, showing that the limiting variance $\sigma_f^2$ is governed entirely by the refresh kernel $K^{\star}$ and the uniform measure $\pi,$ independently of the memory parameters $p_1,p_2, \dots, p_k.$ Finally, Section \ref{sec:7} computes $\sigma_f^2$ explicitly for eight natural observables on the hypercube.

\section{The coordinate-wise Elephant Random Walk}\label{sec:2}
We introduce the Coordinate-wise Elephant Random Walk (CERW), as an extension of the one-dimensional Elephant Random Walk (ERW) on $\mathbb Z$ \cite{Schutz2004}. The ERW is a random walk that incorporates the memory of its history. It moves like elephants, who possess remarkably long-term spatial memory.   
Let $(\Omega,\F,\PP)$ be a probability space where all the random variables below are defined. Fix $k \in \N,$ and let $Q_k:=\{-1,1\}^k$ denote the hypercube of dimension $k$. The state $S_n$ of the walk at time $n$ is 
\begin{equation*}
    S_n=(X_n^{(1)}, X_n^{(2)}, \dots, X_n^{(k)}) \in Q_k.
\end{equation*}
For $x \in Q_k$, we write $x^{i\to s}$ in section \ref{sec:7} for the vector obtained from $x$ by replacing its $i-$th coordinate by $s \in \{-1,1\}$ and leaving all other coordinates unchanged.  
Let $p_1,p_2,\dots,p_k\in [0,1],$ and define $\alpha_i = 2p_i-1$ for $1\leq i \leq k.$ 
Let $(I_n)_{n\geq 1}$ be a sequence of independent uniform random variables on $\{1, \ldots , k\}$, independent of the initial state $S_0$, that is, 
\begin{equation} \label{eq:In}
    I_n\sim \Unif\{1,\dots,k\}, \quad n \ge 1. 
\end{equation}
The variable $I_n$ indicates which coordinate is selected for update at time $n$.
For each coordinate $i\in \{1,\dots,k\}$, define its update times $(\tau_r^{(i)})_{r \ge 1}$  recursively by 
\begin{equation}\label{eq:updt}
    \tau_0^{(i)}=0, \qquad \tau_r^{(i)}:=\inf\{n>\tau_{r-1}^{(i)}:I_n=i\}, \qquad r\geq 1.
\end{equation}
Thus, $\tau_r^{(i)}$ is the $r-$th time that coordinate $i$ is selected.
We also define the number $N^{(i)}_n$ of updates of coordinate $i$ up to time $n$
\begin{equation} \label{eq:Nupdates}
    N_n^{(i)}:=\max\{r \geq 0 : \tau_r^{(i)}\leq n \}, \quad n \ge 0 . 
\end{equation}
 The history $\mathcal H^{(i)}_n$ of coordinate $i$ up to time $n$ is the finite sequence 
\begin{equation*}
    \calH_n^{(i)}:=\left(X_{\tau_0^{(i)}}^{(i)},X_{\tau_1^{(i)}}^{(i)}, \dots , X_{\tau_{N_n^{(i)}}^{(i)}}^{(i)} \right).
\end{equation*}
In other words, $\calH_n^{(i)}$ records the successive values taken by coordinate $i$ at its own update times until global time $n$. We include the initial value $ X_0^{(i)}$, so the length of $\calH_n^{(i)}$ is $N_n^{(i)}+1.$
Let $\F_n:=\sigma(S_0,S_1,\dots,S_n,I_1,\dots,I_n)$, $n \ge 1$, $\mathcal F_0 :=\sigma (S_0) $ form the natural filtration. Given $\F_n,$ the transition of $S_n = (X_n^{(1)}, \ldots , X_n^{(k)})$  from time $n$ to time $n+1$ is defined as follows.
\begin{enumerate}
    \item Choose a random coordinate $I_{n+1}$ uniformly on $\{1, \ldots , k\}$ as in \eqref{eq:In}, 
    \item Sample $\varepsilon_{n+1}$ uniformly from the history $\calH_n^{(I_{n+1})}$ of the chosen coordinate $I_{n+1}$, 
    \item Update the selected coordinate according to 
    \begin{equation}
    \label{het_eq:update-rule}X_{n+1}^{(I_{n+1})}= \begin{cases}
            \,\,\,\,\;\;\varepsilon_{n+1} \quad &\text{with probability } \, p_{I_{n+1}}, \\ 
            {} -\varepsilon_{n+1} \quad &\text{with probability } \, 1-p_{I_{n+1}} \\ 
        \end{cases}
    \end{equation} 
    and keep all other coordinates unchanged: \begin{equation*}
        X_{n+1}^{(j)} =X_{n}^{(j)} \quad
        \text{for}\; j\neq I_{n+1} . 
    \end{equation*}
\end{enumerate}
The resulting process $(S_n)_{n\geq0}$ is called the {\it Coordinate-wise Elephant Random Walk} (CERW) on $Q_k.$

This construction is a coordinate-wise analogue of the one-dimensional elephant random walk. In the original ERW \cite{Schutz2004} on $\mathbb Z$, the next increment of the ERW is obtained by sampling a previous increment uniformly from the history and then using it with probability $p$ or reversing it with probability $1-p$. Here in the CERW, the same history $\mathcal H^{(i)}_n$ is applied separately to each coordinate $i$ of the hypercube $Q_k$. Thus, each coordinate has its own update history, while the global process $(S_n)_{n\ge 0}$ evolves in finite state space $Q_k$ by updating one coordinate at a time. Consequently, $(S_n)_{n\ge 0}$ is a nearest-neighbor walk on the hypercube with possible holding, but its transition rule depends on the coordinate-wise histories and it is therefore non-Markovian on $Q_k.$

The CERW $(S_n)_{n\ge 0}$ should not be confused with a collection of $k$ {\it independent} one-dimensional ERWs. Its coordinates  share a common clock: at each global step $n = 0, 1, 2, \ldots$, only one coordinate is uniformly selected and updated by \eqref{het_eq:update-rule}, so the number of updates per coordinate by time $n$ is random. This shared-clock coupling gives the CERW its genuine multivariate character, even though its memory is defined separately for each coordinate.
\section{One-Coordinate Dynamics}\label{sec:3}
\noindent We start our analysis of the CERW with a fixed coordinate $i.$ For notational simplicity, we define the embedded process and its filtration 
\begin{equation}\label{eq:Yr}
    Y^{(i)}_r:=X_{\tau_r^{(i)}}^{(i)},  \qquad \mathcal{G}^{(i)}_r:=\sigma(Y^{(i)}_0,\dots,Y^{(i)}_r), \quad r \ge 0  
\end{equation}
along its update times $\tau^{(i)}_r$, $r \ge 0$ in \eqref{eq:updt}. We also  write $x_r^{(i)}$ the proportion of embedded values equal to $1$ among indices $0, 1, \ldots , r$ by  
\begin{equation} \label{eq:xri}
    Z^{(i)}_r =\sum_{\ell=0}^{r} \1_{\{Y^{(i)}_{\ell}=1\}},\qquad x^{(i)}_r:=\frac{Z^{(i)}_r}{r+1} \qquad \text{and} \qquad  m^{(i)}_r:=2x^{(i)}_r-1, \quad r \ge 0,  
\end{equation}
where $m_r^{(i)}$ is the empirical mean of the embedded coordinate values $Y_0^{(i)},Y_1^{(i)}, \dots, Y_r^{(i)} $
\begin{equation} \label{eq:mri}
   m_r^{(i)}=\dfrac{1}{r+1}\sum_{\ell=0}^r Y_{\ell}^{(i)}, \quad r \ge 0 . 
\end{equation}
With the number $N^{(i)}_n$ of updates in \eqref{eq:Nupdates} for the global time $n$, we define the corresponding global time memory bias $\widehat{m}_n^{(i)}$ by
\begin{equation} \label{eq:mhat}
\widehat{m}_n^{(i)}:=m^{(i)}_{N_n^{(i)}}, \quad n \ge 0 . 
\end{equation}
We shall see that these quantities will appear naturally in our analysis of the CERW. 
\subsection{Embedded process and martingale difference}
\begin{lemma} \label{het_lem:embedded-mean}
    With \eqref{eq:mri} and $\alpha_i = 2 p_i -1$ the embedded process $(Y^{(i)}_r)_{r \ge 0}$ in \eqref{eq:Yr} satisfies 
    \begin{equation} \label{eq:lem3.1}
        \E\left[Y^{(i)}_{r+1}\mid \G^{(i)}_r\right]=\alpha_im^{(i)}_r, \quad r \ge 0 . 
    \end{equation}
\end{lemma}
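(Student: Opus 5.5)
The plan is to condition on the richer global $\sigma$-field at the update time and then project back onto $\G^{(i)}_r$ with the tower property. At time $\tau^{(i)}_{r+1}$ the chosen coordinate is $I_{\tau^{(i)}_{r+1}}=i$. The value $Y^{(i)}_{r+1}$ is produced by rule \eqref{het_eq:update-rule}: we sample $\varepsilon$ uniformly from $\calH^{(i)}_{\tau^{(i)}_{r+1}-1}$, then keep it with probability $p_i$ or flip it with probability $1-p_i$.

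The first step is to identify the history at that moment. Coordinate $i$ is not updated strictly between $\tau^{(i)}_r$ and $\tau^{(i)}_{r+1}$, so $N^{(i)}_{\tau^{(i)}_{r+1}-1}=r$. The history used is therefore exactly $(Y^{(i)}_0,\dots,Y^{(i)}_r)$, of length $r+1$.

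The second step is to compute the conditional mean given the past. Let $n=\tau^{(i)}_{r+1}-1$ and condition on $\F_n$ together with the event $I_{n+1}=i$. The sample $\varepsilon$ is uniform on the history, so
\begin{equation*}
\E[\varepsilon\mid\cdot\,]=\frac{1}{r+1}\sum_{\ell=0}^r Y^{(i)}_\ell=m^{(i)}_r .
\end{equation*}
The keep/flip step then gives
\begin{equation*}
\E[Y^{(i)}_{r+1}\mid\cdot\,]=p_i m^{(i)}_r-(1-p_i)m^{(i)}_r=\alpha_i m^{(i)}_r .
\end{equation*}

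The third step makes this rigorous, because $\tau^{(i)}_{r+1}$ is random. Decompose over the events $\{\tau^{(i)}_{r+1}=n+1\}=\{\tau^{(i)}_r\le n,\ N^{(i)}_n=r,\ I_{n+1}=i\}$. The part $\{\tau^{(i)}_r\le n,\ N^{(i)}_n=r\}$ is $\F_n$-measurable, and $I_{n+1}$ is independent of $\F_n$. Conditionally on $\F_n$ and $I_{n+1}=i$, the fresh sampling and flip randomness give mean $\alpha_i m^{(i)}_r$ on each such event. For $A\in\G^{(i)}_r$, the set $A\cap\{\tau^{(i)}_{r+1}=n+1\}$ lies in $\F_n\vee\sigma(I_{n+1})$, since each $Y^{(i)}_\ell$ with $\ell\le r$ is determined by time $n$ on this event. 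Hence
\begin{equation*}
\E[Y^{(i)}_{r+1}\1_A]=\sum_n\E\big[\alpha_i m^{(i)}_r\1_A\1_{\{\tau^{(i)}_{r+1}=n+1\}}\big]=\E[\alpha_i m^{(i)}_r\1_A],
\end{equation*}
using $\tau^{(i)}_{r+1}<\infty$ a.s. Since $m^{(i)}_r$ is $\G^{(i)}_r$-measurable, \eqref{eq:lem3.1} follows.

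The main obstacle is this measurability bookkeeping. The paper's filtration $\F_n$ does not explicitly contain $\varepsilon_{n+1}$ or the keep/flip coin. I would state the modelling assumption as a conditional law given $\F_n\vee\sigma(I_{n+1})$, which is implicit in ``given $\F_n$'' in the definition, and verify the event decomposition above.
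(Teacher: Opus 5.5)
Your proposal is correct and uses the same computation as the paper: the uniform draw from $(Y^{(i)}_0,\dots,Y^{(i)}_r)$ has conditional mean $m^{(i)}_r$, the keep/flip step multiplies it by $\alpha_i$, and the tower property finishes. The only difference is rigor: the paper simply asserts the sampling law conditionally on $\G^{(i)}_r$, while your decomposition over $\{\tau^{(i)}_{r+1}=n+1\}=\{N^{(i)}_n=r,\ I_{n+1}=i\}$, using $A\cap\{N^{(i)}_n=r\}\in\F_n$ for $A\in\G^{(i)}_r$, supplies the justification the paper leaves implicit.
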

\begin{proof}
    By the definition of the CERW,  conditional on $\G_r^{(i)}$, the next update of coordinate $i$ is obtained by uniform sampling from $Y_0^{(i)},Y_1^{(i)}, \dots, Y_r^{(i)}.$ Let $\varepsilon$ denote the sample value. Then
\begin{equation*}
\E[\varepsilon\mid \G_r^{(i)}]
=
\frac{1}{r+1}\sum_{\ell=0}^{r}Y_\ell^{(i)}
=
m_r^{(i)}.
\end{equation*}
Given $\varepsilon$, the new value is $\varepsilon$ with probability $p_i$ and $-\varepsilon$ with probability $1-p_i$. Hence
\begin{equation*}
\E[Y_{r+1}^{(i)}\mid \varepsilon,\G_r^{(i)}]
= p_i\varepsilon-(1-p_i)\varepsilon = (2p_i-1)\varepsilon = \alpha_i\varepsilon.
\end{equation*}
By the tower property of conditional expectations, taking the conditional expectation once again gives \eqref{eq:lem3.1} 
\begin{equation*}
\E[Y_{r+1}^{(i)}\mid \G_r^{(i)}]=\alpha_i\E[\varepsilon\mid \G_r^{(i)}]=\alpha_i m_r^{(i)}, \quad r \ge 0 .
\end{equation*}  
\end{proof}
\begin{lemma}\label{het_lem:embedded-recursion}
    Define a martingale difference sequence $\eta_{r+1}^{(i)} := \1_{\{Y_{r+1}^{(i)}=1\}}-\E[\1_{\{Y_{r+1}^{(i)}=1\}}\mid\G_r^{(i)}]$, $r \ge 0 $ with $\E[\eta^{(i)}_{r+1}\mid \G ^{(i)}_r]=0$ and $\E[(\eta ^{(i)}_{r+1})^2\mid \G^{(i)}_r] \leq {1}/{4}$.
    $(x_r^{(i)})_{r\ge 0}$ in \eqref{eq:xri} satisfies 
    \begin{equation} \label{eq:Lem3.2}
        x^{(i)}_{r+1}-\frac{1}{2}=\left(1-\frac{2(1-p_i)}{r+2}\right)\left(x^{(i)}_r-\frac{1}{2} \right)+\frac{\eta^{(i)}_{r+1}}{r+2}, \quad r \ge 0 . 
    \end{equation}
\end{lemma}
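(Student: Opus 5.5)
The plan is to compute the conditional probability that the next embedded value equals $1$, and then rearrange the counting identity $Z^{(i)}_{r+1}=Z^{(i)}_r+\1_{\{Y^{(i)}_{r+1}=1\}}$. Since $\1_{\{y=1\}}=(1+y)/2$ for $y\in\{-1,1\}$, Lemma \ref{het_lem:embedded-mean} immediately gives
\begin{equation*}
\E\left[\1_{\{Y^{(i)}_{r+1}=1\}}\mid \G^{(i)}_r\right]=\frac{1+\alpha_i m^{(i)}_r}{2}=\frac12+\alpha_i\left(x^{(i)}_r-\frac12\right),
\end{equation*}
using $m^{(i)}_r=2x^{(i)}_r-1$. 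In particular this conditional probability lies in $[0,1]$. The martingale-difference property $\E[\eta^{(i)}_{r+1}\mid\G^{(i)}_r]=0$ then holds by construction. The variance bound follows because $\eta^{(i)}_{r+1}$ is a centered Bernoulli variable with conditional success probability $q$, so its conditional variance is $q(1-q)\le 1/4$. One should note that the embedded process indeed evolves by the elephant rule with respect to $\G^{(i)}_r$. This is the content already used in Lemma \ref{het_lem:embedded-mean}: the choice of coordinates $I_n$ is independent of the sampling, so conditioning on the coordinate-$i$ history alone gives the stated law.

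For the recursion, I write
\begin{equation*}
x^{(i)}_{r+1}=\frac{Z^{(i)}_r+\1_{\{Y^{(i)}_{r+1}=1\}}}{r+2}=\frac{(r+1)x^{(i)}_r+\frac12+\alpha_i\left(x^{(i)}_r-\frac12\right)+\eta^{(i)}_{r+1}}{r+2}.
\end{equation*}
Next I subtract $\frac12=\frac{(r+1)\frac12+\frac12}{r+2}$ from both sides. The constant terms $\frac12$ cancel, and this leaves
\begin{equation*}
x^{(i)}_{r+1}-\frac12=\frac{(r+1+\alpha_i)\left(x^{(i)}_r-\frac12\right)+\eta^{(i)}_{r+1}}{r+2}.
\end{equation*}
Finally, $r+1+\alpha_i=r+2-(1-\alpha_i)=r+2-2(1-p_i)$ because $\alpha_i=2p_i-1$. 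This yields exactly \eqref{eq:Lem3.2}.

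There is no real obstacle here. The only point needing care is to justify that the conditional law of $Y^{(i)}_{r+1}$ given $\G^{(i)}_r$ is the elephant law, rather than conditioning on the larger global filtration; as noted above, Lemma \ref{het_lem:embedded-mean} already supplies this. The rest is bookkeeping with the factor $r+2$ and the identity $1-\alpha_i=2(1-p_i)$.
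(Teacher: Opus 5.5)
Your proposal is correct and follows the paper's proof essentially step for step: you use $\1_{\{y=1\}}=(1+y)/2$ with Lemma \ref{het_lem:embedded-mean} to get the conditional probability $\frac12+\alpha_i(x^{(i)}_r-\frac12)$, substitute the resulting decomposition into $x^{(i)}_{r+1}=\frac{r+1}{r+2}x^{(i)}_r+\frac{1}{r+2}\1_{\{Y^{(i)}_{r+1}=1\}}$, and simplify using $r+1+\alpha_i=r+2-2(1-p_i)$. Your explicit check that $\E[(\eta^{(i)}_{r+1})^2\mid\G^{(i)}_r]=q(1-q)\le 1/4$ is a small addition that the paper only asserts.
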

\begin{proof}
Since $\1_{\{Y_{r+1}^{(i)}=1\}}={(1+Y_{r+1}^{(i)})}/2$ and $m_r^{(i)}=2x_r^{(i)}-1$ by definition, Lemma \ref{het_lem:embedded-mean} gives
\begin{equation*}\E\left[\1_{\{Y_{r+1}^{(i)}=1\}}\mid \G_r^{(i)}\right]=\frac{1+\alpha_i m_r^{(i)}}2 = \frac{1}{2}+\alpha_i\left(x_r^{(i)}-\frac{1}{2}\right), \quad r \ge 0 . 
\end{equation*}
It follows from \eqref{eq:xri} that $ Z_{r+1}^{(i)}=Z_r^{(i)}+\1_{\{Y_{r+1}^{(i)}=1\}}$, 
and hence
$ x_{r+1}^{(i)} = \frac{r+1}{r+2}x_r^{(i)}+\frac{1}{r+2}\1_{\{Y_{r+1}^{(i)}=1\}}$, $r \ge 0$. Thus, substituting the decomposition
\begin{equation*}
  \1_{\{Y_{r+1}^{(i)}=1\}}
= \E\left[\1_{\{Y_{r+1}^{(i)}=1\}} \mid \G_r^{(i)}\right] + \eta_{r+1}^{(i)} =
\frac{1}{2}+\alpha_i\left(x_r^{(i)}-\frac{1}{2}\right)+\eta_{r+1}^{(i)}  
\end{equation*}
into the recursion for $x_{r+1}^{(i)}$ gives
\begin{equation*}
  x_{r+1}^{(i)}-\frac{1}{2} = \frac{r+1+\alpha_i}{r+2} \left(x_r^{(i)}-\frac{1}{2}\right) +\frac{\eta_{r+1}^{(i)}}{r+2}.  
\end{equation*}
Since $r+1+\alpha_i = r+1+2p_i-1 = r+2-2(1-p_i),$ we obtain
\begin{equation*}
 \frac{r+1+\alpha_i}{r+2} = 1-\frac{2(1-p_i)}{r+2}.   
\end{equation*}
This proves the recursion \eqref{eq:Lem3.2}. 
\end{proof}
\subsection{Almost sure vanishing biases}
\begin{proposition}\label{het_thm:almost-sure-vanish}
    Fix a coordinate $i$. If $p_i<1,$ the empirical mean $m^{(i)}_r$ in \eqref{eq:mri} of the embedded process almost surely converges to $0$, as $r \to \infty$. 
\end{proposition}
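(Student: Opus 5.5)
We work with the recursion of Lemma \ref{het_lem:embedded-recursion} for $u_r:=x^{(i)}_r-\tfrac12$, which gives $m^{(i)}_r=2u_r\to0$ as soon as $u_r\to0$. Write $c:=2(1-p_i)\in(0,2]$, which is strictly positive because $p_i<1$. The recursion reads
\[
u_{r+1}=\Bigl(1-\frac{c}{r+2}\Bigr)u_r+\frac{\eta^{(i)}_{r+1}}{r+2}.
\]
Two routes are available: a Robbins--Siegmund type supermartingale argument, or an explicit product-normalised martingale. I would use the explicit one. Set $a_r:=\prod_{j=0}^{r-1}\bigl(1-\frac{c}{j+2}\bigr)$. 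For $c<2$ all factors are positive. For $c=2$ the first factor vanishes, but then $\alpha_i=-1$ and the walk is deterministic alternation, so $m_r$ is trivially $O(1/r)$; I would treat this case separately. By standard Gamma-function asymptotics, $a_r=\Gamma(r+2-c)/(\Gamma(2-c)\Gamma(r+2))\sim C\,r^{-c}$. Dividing the recursion by $a_{r+1}$ gives
\[
\frac{u_r}{a_r}=u_0+\sum_{\ell=1}^{r}\frac{\eta^{(i)}_\ell}{(\ell+1)a_\ell}=:u_0+L_r,
\]
and $L_r$ is a $(\G^{(i)}_r)$-martingale.

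The main step is to show that $a_rL_r\to0$ almost surely. The conditional variance bound $\E[(\eta^{(i)}_{\ell})^2\mid\G^{(i)}_{\ell-1}]\le 1/4$ gives
\[
\langle L\rangle_r\le \tfrac14\sum_{\ell\le r}(\ell+1)^{-2}a_\ell^{-2}\asymp\sum_{\ell\le r}\ell^{2c-2},
\]
and I would split into cases according to the size of $c$.

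If $c>1/2$, this sum diverges like $r^{2c-1}$. The strong law for square-integrable martingales, or equivalently Kronecker's lemma applied to $\sum \eta_\ell/((\ell+1)a_\ell b_\ell)$ with $b_\ell=a_\ell^{-1}\ell^{-1/2}(\log \ell)^{1/2+\delta}$, gives $L_r=o\bigl(\langle L\rangle_r^{1/2}(\log\langle L\rangle_r)^{1/2+\delta}\bigr)$. Hence $|u_r|=a_r|L_r|+a_r|u_0|=O\bigl(r^{-1/2}(\log r)^{1/2+\delta}\bigr)+O(r^{-c})\to0$.

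If $c\le 1/2$, the quantity $\langle L\rangle_\infty$ may be finite when $c<1/2$, in which case $L_r$ converges almost surely and $a_rL_r\to0$ because $a_r\to0$. For $c=1/2$ the sum grows logarithmically and the same martingale law handles it. In every case $a_r\to0$ since $c>0$, so $a_ru_0\to0$ as well.

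A cleaner unified alternative avoids the case analysis. Apply Robbins--Siegmund to $V_r=u_r^2$:
\[
\E[V_{r+1}\mid\G_r]\le\Bigl(1-\frac{2c}{r+2}+\frac{c^2}{(r+2)^2}\Bigr)V_r+\frac{1}{4(r+2)^2}.
\]
This yields that $V_r$ converges almost surely and that $\sum \frac{c}{r+2}V_r<\infty$. Since $\sum 1/(r+2)=\infty$, the limit must be $0$. I expect the main obstacle to be only bookkeeping: citing the Robbins--Siegmund theorem correctly (or, in the explicit route, the Gamma asymptotics and the martingale strong law), and handling the degenerate case $p_i=0$. I would present the Robbins--Siegmund argument as the proof, since it uses nothing beyond $c>0$ and the conditional variance bound of Lemma \ref{het_lem:embedded-recursion}.
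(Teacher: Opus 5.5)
Your chosen argument, Robbins--Siegmund applied to $V_r=u_r^2$ via the recursion of Lemma~\ref{het_lem:embedded-recursion}, is exactly the paper's proof; you also usefully make explicit the step the paper leaves implicit, namely that $\sum_r \frac{c}{r+2}V_r<\infty$ together with $\sum_r\frac{1}{r+2}=\infty$ forces the almost sure limit of $V_r$ to be $0$. One aside is wrong but harmless: for $p_i=0$ the coordinate is not a deterministic alternation, since it reverses a uniformly sampled past value (with $Y^{(i)}_0=1$ one gets $Y^{(i)}_1=-1$ and then $Y^{(i)}_2$ is a fair coin), but this only affects the explicit route you set aside, and the Robbins--Siegmund argument covers $c=2$ without change.
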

\begin{proof}
By Lemma \ref{het_lem:embedded-recursion} with $c_i:=2(1-p_i)>0$ we see that  $u_r^{(i)}:=x_r^{(i)}-\frac{1}{2}$ satisfies  
\begin{equation*}
 u_{r+1}^{(i)} = \left(1-\frac{c_i}{r+2}\right)u_r^{(i)} +\frac{\eta_{r+1}^{(i)}}{r+2}, \quad 
 r \ge 0 ,    
\end{equation*}
Thanks to the properties of the martingale difference $\eta^{(i)}_\cdot$ in Lemma \ref{het_lem:embedded-recursion},  taking the conditional expectations of the square of $u^{(i)}_{r+1}$, 
we get
\begin{equation}\label{eq:square-uri}
 \E[(u_{r+1}^{(i)})^2\mid \G_r^{(i)}]
\leq \left(1-\frac{c_i}{r+2}\right)^2(u_r^{(i)})^2+\frac{1}{4(r+2)^2}, \quad r \ge 0 .    
\end{equation}
Since $c_i>0$, this recursion implies that $u_r^{(i)}\to0$ almost surely, as $r \to \infty$ by the Robbins--Siegmund almost supermartingale theorem found in \cite{RobbinsSiegmund1971}. Therefore, as $r\to \infty$, 
$m_r^{(i)}=2u_r^{(i)}$ converges almost surely to $0$. 
\end{proof}
\begin{corollary}\label{het_cor:almost-sure-vanish}
Assume $\max_{1\le i \le k} p_i<1$. Then with probability one, the global time memory bias $\widehat m_n^{(i)} = m^{(i)}_{N_n^{(i)}}$ in \eqref{eq:mhat} converges to $0$, as $n \to \infty$ for every $i = 1, \ldots , k$. 
\end{corollary}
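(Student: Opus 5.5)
The plan is to combine Proposition \ref{het_thm:almost-sure-vanish}, applied to each coordinate, with the fact that every coordinate is selected infinitely often almost surely. This turns the random time change $r = N_n^{(i)}$ into a harmless substitution.

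\textbf{Step 1: infinitely many updates.} Fix $i\in\{1,\dots,k\}$. The indicators $\1_{\{I_n=i\}}$, $n\ge1$, are i.i.d.\ Bernoulli with parameter $1/k>0$. By the strong law of large numbers, $N_n^{(i)}/n\to 1/k$ almost surely; the second Borel--Cantelli lemma would also suffice. In particular $N_n^{(i)}\to\infty$ almost surely as $n\to\infty$. Since $N_n^{(i)}$ is nondecreasing in $n$ with unit jumps, every stopping time $\tau_r^{(i)}$ is finite almost surely. Hence the embedded sequence $(Y_r^{(i)})_{r\ge0}$ is well defined for all $r$ on an event of full probability.

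\textbf{Step 2: embedded convergence.} Because $\max_j p_j<1$, we have $p_i<1$, and Proposition \ref{het_thm:almost-sure-vanish} gives an event $A_i$ with $\PP(A_i)=1$ on which $m_r^{(i)}\to0$ as $r\to\infty$.

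\textbf{Step 3: composition and intersection.} Let $B_i$ be the full-probability event from Step 1 on which $N_n^{(i)}\to\infty$. On $A_i\cap B_i$, fix $\epsilon>0$ and choose $R$ with $|m_r^{(i)}|<\epsilon$ for all $r\ge R$. Then choose $n_0$ with $N_n^{(i)}\ge R$ for all $n\ge n_0$. This gives $|\widehat m_n^{(i)}|=|m^{(i)}_{N_n^{(i)}}|<\epsilon$ for all $n\ge n_0$. The finite intersection $\bigcap_{i=1}^k (A_i\cap B_i)$ still has probability one, which proves the corollary simultaneously for all coordinates.

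\textbf{Main obstacle.} The only subtle point is that Proposition \ref{het_thm:almost-sure-vanish} concerns the embedded process, whose law must be the one described in Lemma \ref{het_lem:embedded-mean}. In particular, the selection sequence $(I_n)$ must not interfere with the embedded dynamics. I would note that, conditional on the past, the value drawn at time $\tau_{r+1}^{(i)}$ depends only on $\calH^{(i)}$ and on the fresh sampling and flip randomness. The preceding results already take this into account, so the argument above is purely pathwise and needs no further probabilistic input.
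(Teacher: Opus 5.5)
Your proposal is correct and follows essentially the same route as the paper: show $N_n^{(i)}\to\infty$ almost surely (the paper uses the second Borel--Cantelli lemma, which you note as an alternative to the strong law), combine with Proposition \ref{het_thm:almost-sure-vanish} on the intersection of the two full-probability events, and intersect over the $k$ coordinates. Your explicit $\epsilon$--$R$--$n_0$ step is just a spelled-out version of the paper's remark that $\widehat m_n^{(i)}$ is a subsequence, with repetitions, of $(m_r^{(i)})_{r\ge0}$ whose index tends to infinity.
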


\begin{proof}
Fix $i\in\{1,\dots,k\}$ first. By Proposition 
\ref{het_thm:almost-sure-vanish}, it suffices to show that the number $N_n^{(i)}$ of updates of coordinate $i$ tends to
infinity, as $n \to \infty$. For $n\ge1$, let $A_n^{(i)}:=\{I_n=i\}$. Since the events $(A_n^{(i)})_{n\ge1}$ are independent and $\PP(A_n^{(i)})=\frac{1}{k}$, 
\begin{equation*}
 \sum_{n=1}^{\infty}\PP(A_n^{(i)}) = \sum_{n=1}^{\infty}\frac1k = \infty.   
\end{equation*}

By the second Borel--Cantelli lemma, $\PP(A_n^{(i)}\ \text{infinitely often})=1$.
Therefore, coordinate $i$ is selected infinitely many times almost surely,
which is equivalent to $N_n^{(i)}\to\infty$ almost surely.
Recall that $\widehat m_n^{(i)}=m_{N_n^{(i)}}^{(i)}.$
On the event where both
$m_r^{(i)}\to0$ and
$N_n^{(i)}\to\infty,$
the sequence $\widehat m_n^{(i)}$ is a subsequence, with repetitions, of
$(m_r^{(i)})_{r\ge0}$ whose index tends to infinity. Consequently, $\widehat m_n^{(i)} \to0$, as $n \to \infty$ almost surely. 
Since there are only
$k$ coordinates, intersecting the corresponding 
probability-one events gives $ \widehat m_n^{(i)}\to0$ 
for $i=1,\dots,k$.
\end{proof}
\subsection{Rate of decay of biases}
\begin{proposition}
    Assume $p_i\in(0,1).$ There exists $C_{p_i}<\infty$ such that for all $r \geq 1,$
    \begin{equation*}
        \E[(m_r^{(i)})^2] \leq C_{p_i} \begin{cases}
            r^{-1}, & \text{\rm if }\, 0<p_i<\tfrac34,\\
            \dfrac{\log r}{r}, &\text{\rm if } \, p_i=\tfrac34,\\
            r^{-4(1-p_i)}, &\text{\rm if } \, \tfrac34<p_i<1.
        \end{cases}
    \end{equation*}
\end{proposition}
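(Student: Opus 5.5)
Taking unconditional expectations in \eqref{eq:square-uri} turns the bound into a deterministic recursion for $a_r:=\E[(u_r^{(i)})^2]$, where $u_r^{(i)}=x_r^{(i)}-\tfrac12=m_r^{(i)}/2$ as in the proof of Proposition \ref{het_thm:almost-sure-vanish}. With $c:=c_i=2(1-p_i)\in(0,2)$ this reads
\begin{equation*}
a_{r+1}\le \Bigl(1-\frac{c}{r+2}\Bigr)^2 a_r+\frac{1}{4(r+2)^2},\qquad a_0\le \tfrac14 .
\end{equation*}
The cross term vanishes because $\E[\eta^{(i)}_{r+1}\mid\G^{(i)}_r]=0$. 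The thresholds in the statement come from the contraction rate $(1-c/(r+2))^2\approx 1-2c/(r+2)$ with $2c=4(1-p_i)$. Thus $2c>1$ corresponds to $p_i<\tfrac34$, $2c=1$ to $p_i=\tfrac34$, and $2c<1$ to $p_i>\tfrac34$. The plan is to unroll this linear inequality and estimate the resulting products and sums.

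\textbf{Unrolling.} Set $\Pi_{j,r}:=\prod_{\ell=j}^{r-1}(1-\frac{c}{\ell+2})^2$. Since $c<2$, every factor lies in $(0,1]$; the factor at $\ell=0$ equals $(1-c/2)^2>0$. Iterating gives
\begin{equation*}
a_r\le \Pi_{0,r}\,a_0+\sum_{j=0}^{r-1}\Pi_{j+1,r}\,\frac{1}{4(j+2)^2}.
\end{equation*}
We will use $\log(1-x)\le -x$ for $x\in[0,1)$, applied only to factors with $\ell\ge 1$ so that $x=c/(\ell+2)\le 2/3$. Combined with $\sum_{\ell=j}^{r-1}\frac{1}{\ell+2}\ge \log\frac{r+2}{j+2}$, this yields $\Pi_{j,r}\le C\bigl(\frac{j+2}{r+2}\bigr)^{2c}$ uniformly in $0\le j\le r$, with the constant absorbing the bounded $\ell=0$ factor. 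Consequently
\begin{equation*}
a_r\le C (r+2)^{-2c}+C (r+2)^{-2c}\sum_{j=0}^{r-1}(j+3)^{2c-2}.
\end{equation*}

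\textbf{Case analysis.} If $2c>1$, the sum is $O(r^{2c-1})$, giving $a_r=O(r^{-1})$; the first term $r^{-2c}$ is smaller. If $2c=1$, the sum is $O(\log r)$, giving $O(\log r/r)$. If $2c<1$, i.e.\ $\tfrac34<p_i<1$, the sum converges because $2c-2<-1$, so $a_r=O(r^{-2c})=O(r^{-4(1-p_i)})$. Multiplying by $4$ converts the bound on $a_r$ into the bound on $\E[(m_r^{(i)})^2]$. For $r\ge 1$ all constants depend only on $p_i$, so $C_{p_i}$ can be taken as their maximum.

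\textbf{Main obstacle.} The only delicate point is the product estimate $\Pi_{j,r}\lesssim ((j+2)/(r+2))^{2c}$ with a constant uniform in $j$. This follows from the logarithmic bounds above without Gamma-function asymptotics; alternatively one can write the product via Gamma functions and apply Stirling. We never need the matching lower bound, since only an upper bound is claimed. The hypothesis $p_i>0$ is not used beyond guaranteeing $c<2$, which keeps every factor nonnegative so the unrolling is valid; at $p_i=0$ the factor at $\ell=0$ vanishes, which is harmless.
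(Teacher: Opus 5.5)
Your proposal is correct and follows the paper's route: take expectations in \eqref{eq:square-uri}, unroll the linear recursion for $a_r$, bound the products by $C((j+2)/(r+2))^{2c_i}$, and split according to whether $2c_i-2$ is above, at, or below $-1$. In fact, you supply the product estimate that the paper's ``iterating this inequality'' leaves implicit.

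One caveat applies to the paper's statement and proof as well as to yours. At $p_i=\tfrac34$ and $r=1$, the bound $C_{p_i}\log r/r$ equals $0$, while $\E[(m_1^{(i)})^2]=p_i=\tfrac34$. So your claim that the critical sum is $O(\log r)$ for all $r\ge1$ fails there, and the critical case should be stated for $r\ge2$ (or with $\log(r+1)$ in place of $\log r$).
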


\begin{proof}
Let $a_r:=\E[(u_r^{(i)})^2]$. As in the proof of Proposition \ref{het_thm:almost-sure-vanish}, taking expectations of \eqref{eq:square-uri}, we obtain $a_{r+1} \leq \left(1-\frac{c_i}{r+2}\right)^2 a_r + \frac{1}{4(r+2)^2}$, $r \ge 0 $. 
Iterating this inequality for $r$ times gives
\begin{equation*}
  a_r \leq C_{p_i}r^{-2c_i} + C_{p_i}r^{-2c_i}\sum_{j=2}^{r} j^{2c_i-2}  , 
\end{equation*}
where $C_{p_i}$ is a positive constant which depends only on $p_i$ through $c_i = 2(1-p_i)$. The behavior of the last sum depends on whether $2c_i-2<-1, 2c_i-2=-1$ or $2c_i-2>-1$. Since $c_i=2(1-p_i)$, the critical case is $2c_i - 2 = -1 $ or equivalently $p_i=\frac{3}{4}$. Therefore, we have 
\begin{equation*}
 a_r\leq C_{p_i} 
 \begin{cases}
r^{-1}, & 0<p_i<\frac34,\\
\dfrac{\log r}{r}, & p_i=\frac34,\\
r^{-4(1-p_i)}, & \frac34<p_i<1. \end{cases}   
\end{equation*}
Finally, we conclude because $m_r^{(i)}=2u_r^{(i)}$, so $\E[(m_r^{(i)})^2]=4a_r$,
and the constant $4$ is absorbed into $C_{p_i}$.
\end{proof}
The phase transition at $p_i=\frac{3}{4}$ mirrors the classical trichotomy of the one-dimensional ERW \cite{Bercu2018, Schutz2004}, where the same threshold separates diffusive, critical and superdiffusive behavior of the walk's displacement: even for $p_i>\frac{3}{4},$ the mean squared bias still decays to zero, but at a slower polynomial rate. 

\section{Comparison with the refresh kernel}\label{sec:4}
Let $K^\star$ be the transition kernel of the following {\it memoryless refresh} chain on the hypercube $Q_k.$ From state $x \in Q_k$, the chain chooses a coordinate $I$ uniformly from $\{1,2,\dots,k\}$ with probability $1/k$ and replaces $x^{I}$ by an independent symmetric Rademacher variable. That is, 
\begin{equation} \label{eq:refreshK}
    K^\star(x,\cdot)=\dfrac{1}{k}\sum_{i=1}^kK^{\star}_i(x,\cdot), \quad x \in Q_k, 
\end{equation}
where $K^{\star}_i$ leaves all coordinates except $i$ unchanged and sets coordinate $i$ equal to $\pm1$ with probability ${1}/{2}$ each. 
Note that the uniform probability measure $\pi$ on $Q_k$ defined by  $\pi(x):={1}/{2^k}$,  $x\in Q_k$,  is invariant for $K^{\star}$. We call $K^\star$ the {\it refresh kernel}. Some examples are discussed later in section \ref{sec:7} (see \eqref{eq:Kstarf}). 

At time $n,$ conditional on the past, the CERW has a random transition kernel $K_n$ on $Q_k.$ More precisely, if the current state is $x=S_n,$ then the coordinate $i$ is selected with probability ${1}/{k},$ all other coordinates remain fixed, and coordinate $i$ is set to $1$ with probability $(1+\alpha_i\widehat{m}_n^{(i)})/2$. See \eqref{eq:lem3.1} of Lemma \ref{het_lem:embedded-mean}. 
Equivalently, for every bounded function $g:Q_k\to \R,$
\begin{equation}\label{het_eq:one-step-kernel}
    \E[g(S_{n+1})\mid \F_n]=(K_ng)(S_n), \quad n \ge 0.
\end{equation}
We shall compare two kernels $K^\star$ in \eqref{eq:refreshK} and $K_n$ in \eqref{het_eq:one-step-kernel}. 
\subsection{Total variation perturbation}
\begin{lemma}\label{het_lem:tv-bound}
    The total variation $\delta_n:=\sup_{x\in Q_k} ||K_n(x,\cdot)-K^{\star}(x,\cdot)||_{\TV}$, $n \ge 0$ satisfies  
    \begin{equation}
        \delta_n\leq \frac{1}{2}\max_{1\leq i \leq k} |\widehat{m}_n^{(i)}|.
    \end{equation}
     In particular, if $p_i<1$ for every $i$, then $\delta_n\to 0$ almost surely and in $L^1$, as $n \to \infty$. 
\end{lemma}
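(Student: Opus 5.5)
We will write both kernels as mixtures over the selected coordinate and compare them term by term. From the description preceding \eqref{het_eq:one-step-kernel}, we have $K_n(x,\cdot)=\frac1k\sum_{i=1}^k K_{n,i}(x,\cdot)$. Here $K_{n,i}(x,\cdot)$ leaves every coordinate $j\neq i$ fixed and sets coordinate $i$ to $+1$ with probability $q_n^{(i)}:=(1+\alpha_i\widehat m_n^{(i)})/2$ and to $-1$ with probability $1-q_n^{(i)}$. This is the same structure as \eqref{eq:refreshK}, with $K_i^\star$ using $q=1/2$. Note that $q_n^{(i)}$ does not depend on $x$: it is $\F_n$-measurable through the history only. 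Convexity of the total variation norm under mixtures with common weights $1/k$ then gives
\begin{equation*}
\|K_n(x,\cdot)-K^\star(x,\cdot)\|_{\TV}\le \frac1k\sum_{i=1}^k\|K_{n,i}(x,\cdot)-K_i^\star(x,\cdot)\|_{\TV}.
\end{equation*}

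Next we compute each summand exactly. The measures $K_{n,i}(x,\cdot)$ and $K_i^\star(x,\cdot)$ are both supported on the two points $x^{i\to 1}$ and $x^{i\to -1}$. Using $\|\mu-\nu\|_{\TV}=\frac12\sum_y|\mu(y)-\nu(y)|$ (the convention that makes the constant $\frac12$ come out), the summand equals
\begin{equation*}
\frac12\Bigl(\bigl|q_n^{(i)}-\tfrac12\bigr|+\bigl|(1-q_n^{(i)})-\tfrac12\bigr|\Bigr)=\bigl|q_n^{(i)}-\tfrac12\bigr|=\frac{|\alpha_i|\,|\widehat m_n^{(i)}|}{2}\le \frac12|\widehat m_n^{(i)}|,
\end{equation*}
since $|\alpha_i|=|2p_i-1|\le1$. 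Averaging over $i$ and bounding the average by the maximum gives the same bound uniformly in $x$, so taking $\sup_x$ yields $\delta_n\le\frac12\max_i|\widehat m_n^{(i)}|$.

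For the convergence statements, assume $p_i<1$ for all $i$. Corollary~\ref{het_cor:almost-sure-vanish} gives $\widehat m_n^{(i)}\to0$ almost surely for every $i$. Since there are finitely many coordinates, $\max_i|\widehat m_n^{(i)}|\to0$ almost surely, hence $\delta_n\to0$ almost surely. For $L^1$ convergence, observe that $0\le\delta_n\le 1$ trivially, since $|\widehat m_n^{(i)}|\le1$. Dominated convergence then upgrades almost sure convergence to $\E[\delta_n]\to0$.

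The only point needing care is the convention for $\|\cdot\|_{\TV}$. With the sup-over-events definition, $\sup_A|\mu(A)-\nu(A)|$ coincides with the half-$\ell^1$ norm, so the argument is unaffected. If instead the full $\ell^1$ norm were intended, the bound would hold without the factor $\frac12$ but still with the same conclusion. Beyond that, the argument is routine.
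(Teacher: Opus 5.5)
Your proposal is correct and follows the paper's proof essentially step by step. Like the paper, you compare the two Bernoulli refresh laws coordinate by coordinate to get $|\alpha_i\widehat m_n^{(i)}|/2$, average over the uniformly selected coordinate, bound by the maximum using $|\alpha_i|\le 1$, and then obtain almost sure convergence from Corollary~\ref{het_cor:almost-sure-vanish} and $L^1$ convergence from bounded convergence. Your explicit mixture decomposition of $K_n$ and your remark on the total variation convention make rigorous what the paper states briefly, and that convention matches the paper's later use of $|\mu(h)-\nu(h)|\le 2\|h\|_\infty\|\mu-\nu\|_{\TV}$.
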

\begin{proof}
    For a fixed selected coordinate $i,$ the only difference between $K_n$ and $K^{\star}$ is the Bernoulli  law used to refresh coordinate $i.$ Under $K_n,$ the probability of $+1$ is $(1+\alpha_i\widehat{m}_n^{(i)})/2$, while under $K^{\star}$ is $1/2.$ Therefore, the total variation distance between these two laws is ${|\alpha_i\widehat{m}_n^{(i)}|}/{2}.$
    Averaging over the uniformly selected coordinate gives 
    \begin{equation*}
        \delta_n\leq \dfrac{1}{2k}\sum_{i=1}^k|\alpha_i\widehat{m}^{(i)}_n|\leq \frac{1}{2}\max_{1\leq i \leq k} |\widehat{m}_n^{(i)}|.
    \end{equation*}
    The almost sure convergence follows from Corollary \ref{het_cor:almost-sure-vanish}. Since $0\leq \delta_n \leq 1,$ bounded convergence also gives $\E[\delta_n]\to0.$
\end{proof}
\begin{lemma}\label{het_lem:finite-horizon}
    For every bounded $g:Q_k\to\R,$ every $n\geq0,$ and every integer $T\geq1,$
    \begin{equation}\label{het_eq:finite_horizon_comparison}
        \left|\E[g(S_{n+T})|\F_n]-((K^{\star})^Tg)(S_n) \right| \leq 2\| g\|_{\infty}\E\left[ \sum_{r=0}^{T-1}\delta_{n+r} \mid \F_n\right] \text{ a.s.}
    \end{equation}
    Consequently, we have 
    \begin{equation}\label{het_eq:finite_horizon_L1}
        \E\left[ |\E\left[g(S_{n+T})\mid \F_n \right] - ((K^{\star})^Tg)(S_n) | \right] \longrightarrow 0, \quad \text{as } n\to\infty.
    \end{equation}
\end{lemma}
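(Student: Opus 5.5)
Fix $g$, $n$, and $T$. The natural route is a telescoping (Duhamel-type) decomposition over the horizon, combined with the tower property. For $0\le r\le T$ put
\[
h_r:=(K^\star)^{T-r}g ,
\]
so that $h_T=g$, $h_0=(K^\star)^Tg$, and $\|h_r\|_\infty\le \|g\|_\infty$ because $K^\star$ is a Markov kernel. Then
\[
\E[g(S_{n+T})\mid\F_n]-((K^\star)^Tg)(S_n)=\sum_{r=0}^{T-1}\E\bigl[h_{r+1}(S_{n+r+1})-h_r(S_{n+r})\mid\F_n\bigr],
\]
since the terms telescope and $h_0(S_n)$ is $\F_n$-measurable.

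For each summand, condition first on $\F_{n+r}\supseteq\F_n$ and use \eqref{het_eq:one-step-kernel}. This gives
\[
\E[h_{r+1}(S_{n+r+1})\mid\F_{n+r}]=(K_{n+r}h_{r+1})(S_{n+r}).
\]
Since $h_r=K^\star h_{r+1}$, the summand becomes
\[
\E\bigl[((K_{n+r}-K^\star)h_{r+1})(S_{n+r})\mid\F_n\bigr].
\]
The standard bound $|(\mu-\nu)h|\le 2\|h\|_\infty\|\mu-\nu\|_{\TV}$ controls it. This holds with the convention $\|\mu-\nu\|_{\TV}=\sup_A|\mu(A)-\nu(A)|$, which is the one used in Lemma~\ref{het_lem:tv-bound}; the factor $2$ is exactly the constant in \eqref{het_eq:finite_horizon_comparison}. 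Because $\delta_{n+r}$ is a supremum over all states, the summand is at most $2\|g\|_\infty\,\E[\delta_{n+r}\mid\F_n]$ in absolute value. Summing over $r$ yields \eqref{het_eq:finite_horizon_comparison}.

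The one delicate point is the path-dependence: $K_{n+r}$ is random and $\F_{n+r}$-measurable. This is exactly why we condition on $\F_{n+r}$ before applying the kernel bound, and then apply the tower property back down to $\F_n$. We also use that $\delta_{n+r}$ is a uniform-in-$x$ bound, so no information about the random location $S_{n+r}$ is needed. I do not expect any other step to be difficult.

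For \eqref{het_eq:finite_horizon_L1}, take expectations of \eqref{het_eq:finite_horizon_comparison}. This gives the bound
\[
2\|g\|_\infty\sum_{r=0}^{T-1}\E[\delta_{n+r}].
\]
The sum has fixed length $T$, and Lemma~\ref{het_lem:tv-bound} gives $\E[\delta_m]\to0$ as $m\to\infty$. Hence each of the finitely many terms tends to $0$ as $n\to\infty$. This uses the standing assumption $p_i<1$ for all $i$, under which Lemma~\ref{het_lem:tv-bound} provides $L^1$ convergence.
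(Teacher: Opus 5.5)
Your proof is correct and is essentially the paper's argument. The paper proves \eqref{het_eq:finite_horizon_comparison} by induction on $T$ via the recursion $R_{n,T+1}(g)=[(K_n-K^\star)h_T](S_n)+\E[R_{n+1,T}(g)\mid\F_n]$ with $h_T=(K^\star)^Tg$; unrolling it gives exactly your telescoping terms $\E[((K_{n+r}-K^\star)(K^\star)^{T-r-1}g)(S_{n+r})\mid\F_n]$, bounded by the same TV estimate and tower property, and the $L^1$ consequence is handled identically (you are also right that it needs the standing hypothesis $\max_i p_i<1$ through Lemma~\ref{het_lem:tv-bound}, which the lemma's statement leaves implicit).
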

\begin{proof}
    For $n\geq 0, T\geq 1$ and bounded $g:Q_k\to\R,$ set the difference 
    \begin{equation*}
        R_{n,T}:=\E\left[g(S_{n+T})\mid \F_n \right]- ((K^{\star})^Tg)(S_n).
    \end{equation*}
    We shall prove \eqref{het_eq:finite_horizon_comparison} by mathematical induction in $T$ for every starting time $n$ and every bounded function $g$. 
    
    \medskip 
    \noindent\textbf{Base case.} Let $T=1,$ by \eqref{het_eq:one-step-kernel}, we have 
        $R_{n,1}= (K_ng)(S_n)-(K^{\star}g)(S_n)$.
    Recall that for probability measures $\mu,\nu$ and bounded function $h$, the inequality $|\mu(h)-\nu(h)|\leq 2\|h\|_{\infty}\|\mu - \nu\|_{TV}$ holds, where we use the notation $\mu(h) := \int h \mathrm d \mu $.  Applying this inequality with the kernels $\mu=K_n(S_n,\cdot)$ and $\nu=K^{\star}(S_n,\cdot)$ gives
    \begin{equation*}
        |R_{n,1}(g)|\leq 2\|g\|_{\infty} \, \|K_n(S_n,\cdot)-K^{\star}(S_n,\cdot) \|_{TV} \leq 2\|g\|_{\infty}\, \delta_n.
    \end{equation*}
    Since $\delta_n$ is $\F_n$-measurable, $\delta_n=\E[\delta_n\mid \F_n]$. Thus, the desired bound follows for $T=1.$
    \medskip
    
    \noindent\textbf{Inductive hypothesis.}
    Assume that the estimate \eqref{het_eq:finite_horizon_comparison} holds for some $T\geq1, $ for every starting time and for every bounded test function. 
    \medskip 
    
    \noindent\textbf{Inductive step.} We will prove  \eqref{het_eq:finite_horizon_comparison} for $T+1.$ By the tower property, $\E[g(S_{n+T+1})\mid \F_n]= \E[ \E[g(S_{n+T+1})\mid \F_{n+1} ] \mid \F_n]$. By definition of $R_{n+1,T}(g),$ $\E[g(S_{n+T+1})\mid \F_{n+1}]=((K^{\star})^Tg)(S_{n+1})+R_{n+1,T}(g).$ Set $h_T:=(K^{\star})^Tg\,$, which is a deterministic bounded function. Therefore, the one step identity \eqref{het_eq:one-step-kernel} gives us, $\E[h_T(S_{n+1})\mid \F_n]=(K_nh_T)(S_n).$ Since $K^{\star}h_T=(K^{\star})^{T+1}g,$ we obtain 
    \begin{equation}\label{het_eq:induction-error-decomposition}
        R_{n,T+1}(g)=\left[(K_nh_T)(S_n)-(K^\star h_T)(S_n)\right]+\E[R_{n+1,T}(g)\mid\F_n].
    \end{equation}
    We estimate the two terms in \eqref{het_eq:induction-error-decomposition}. Since $K^\star$ is a Markov kernel, $\|h_T \|_{\infty}=\|(K^{\star})^Tg\|_{\infty}\leq \| g\|_{\infty}.$ Hence,
    \begin{equation*}
        |(K_nh_T)(S_n)-(K^{\star}h_T)(S_n) | \leq 2\| g\|_{\infty}\delta_n.
    \end{equation*}
    For the second term, Jensen's inequality and the induction hypothesis give
    \begin{align*}
        |\E[R_{n+1,T}(g)\mid \F_n]| &\leq \E[|R_{n+1,T}(g)|\mid \F_n]\\
        & \leq 2\| g\|_{\infty}\E\left[\E\left[\sum_{r=0}^{T-1}\delta_{n+1+r}\mid \F_{n+1}\right] \mid \F_n\right]\\
        &= 2\| g\|_{\infty}\E\left[\sum_{r=0}^{T-1}\delta_{n+1+r}\mid \F_n \right],
    \end{align*}
    where the last equality follows from the tower property. Combining the two estimates and again using that $\delta_n$ is $\F_n$-measurable, we get
    \begin{equation*}
        |R_{n,T+1}(g)|\leq 2\| g\|_{\infty}\E\left[ \sum_{r=0}^T\delta_{n+r}\mid \F_n\right],
    \end{equation*}
    which is the desired estimate for the  horizon $T+1.$ Thus, the induction is complete. 
    Taking expectations in \eqref{het_eq:finite_horizon_comparison} gives
    \begin{equation*}
        \E[|R_{n,T}(g)|]\leq 2\|g\|_{\infty}\sum_{r=0}^{T-1}\E[\delta_{n+r}].
    \end{equation*}
    For a fixed $T$, the right-hand side tends to zero because $\E[\delta_m]\to0.$ This proves \eqref{het_eq:finite_horizon_L1}.
\end{proof}

\begin{lemma}
    Let $t_0:=k\lceil \ln(2k)\rceil.$ Then, for every $x \in Q_k$ and every $A \subseteq Q_k,$
    \begin{equation}\label{het_eq:minorization}
        (K^{\star})^{t_0}(x,A) \geq \frac{1}{2}\pi(A), \qquad x\in Q_k ,\quad A\subseteq Q_k.
    \end{equation}
    Consequently, for every probability measure $\mu$ on $Q_k,$ 
    \begin{equation}\label{het_eq:uniform-ergodicity-TV}
        ||\mu (K^{\star})^{mt_0}- \pi||_{\TV} \leq 2^{-m}, \qquad m\geq0.
    \end{equation}
    In particular, for every bounded $g:Q_k\to\R,$
    \begin{equation}\label{het_eq:uniform-ergodicity-functions}
        \sup_{x\in Q_k} \left|((K^{\star})^{mt_0}g)(x)-\pi(g) \right| \leq ||g||_{\infty}2^{-(m-1)}, \qquad m\geq 0.
    \end{equation}
\end{lemma}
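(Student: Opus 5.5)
We prove the minorization \eqref{het_eq:minorization} by coupling with a coupon-collector argument, and then derive the two ergodicity bounds from it by the standard Doeblin argument.

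\textbf{Step 1: minorization.} Run the refresh chain from $x$ for $t_0$ steps. Let $E$ be the event that every coordinate has been selected at least once among the $t_0$ steps. On $E$, each coordinate has been refreshed at least once. Its final value is then the value of its last refresh, which is an independent fair sign, independent of the selection sequence and of $x$. Hence, conditional on $E$, the state after $t_0$ steps is exactly uniform on $Q_k$, and
\[
(K^\star)^{t_0}(x,A)\ \ge\ \PP(E)\,\pi(A).
\]
It remains to show $\PP(E)\ge 1/2$. By a union bound,
\[
\PP(E^c)\le k(1-1/k)^{t_0}\le k\,e^{-t_0/k}\le k\,e^{-\ln(2k)}=\tfrac12,
\]
using $t_0/k=\lceil\ln(2k)\rceil\ge\ln(2k)$. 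This gives \eqref{het_eq:minorization}.

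\textbf{Step 2: total variation contraction.} Write $P:=(K^\star)^{t_0}=\tfrac12\pi+\tfrac12 R$, where $R(x,\cdot):=2P(x,\cdot)-\pi$ is a Markov kernel by Step 1. Since $\pi P=\pi$, for probability measures $\mu,\nu$ we have $(\mu-\nu)P=\tfrac12(\mu-\nu)R$, because the $\pi$ parts cancel. A Markov kernel does not increase total variation, so $\|(\mu-\nu)P\|_{\TV}\le\tfrac12\|\mu-\nu\|_{\TV}$. Taking $\nu=\pi$ and iterating $m$ times, with $\|\mu-\pi\|_{\TV}\le1$, gives \eqref{het_eq:uniform-ergodicity-TV}.

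\textbf{Step 3: functions.} Apply Step 2 with $\mu=\delta_x$ and use $|\mu(h)-\nu(h)|\le 2\|h\|_\infty\|\mu-\nu\|_{\TV}$, the inequality already used in the proof of Lemma \ref{het_lem:finite-horizon}. This gives the bound $2\|g\|_\infty 2^{-m}=\|g\|_\infty2^{-(m-1)}$, uniformly in $x$, which is \eqref{het_eq:uniform-ergodicity-functions}.

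The only step that needs care is Step 1: the claim that on $E$ the law is exactly uniform and independent of $x$. This holds because the refresh signs are independent of the coordinate choices, so we condition on the whole selection sequence, for which $E$ is measurable, before averaging over the signs. Everything else is routine.
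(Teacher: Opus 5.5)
Your proof is correct and follows the paper's route: the same coupon-collector minorization via the union bound $k e^{-t_0/k}\le \tfrac12$, then a Doeblin-type contraction in total variation, and the factor $2\|g\|_\infty$ when passing to functions. The only difference is that you write out the splitting $P=\tfrac12\pi+\tfrac12 R$, which the paper replaces by a citation to Rosenthal's standard minorization argument; one small correction there is that the $\pi$ parts cancel because $\mu$ and $\nu$ have equal mass, and $\pi P=\pi$ is what lets you iterate with $\nu=\pi$.
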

\begin{proof}
    For every $t\geq1,$ let $E_t$ be the event that every coordinate is selected at least once during the first $t$ steps of the refresh chain. On $E_t,$ each coordinate has been refreshed at least once, and the last refresh of each coordinate determines its values at time $t.$ These last refreshed values are independent Rademacher random variables. Therefore, conditional on $E_t,$ the state at time $t$ has law $\pi,$ independently of the initial state. Hence, for every $x\in Q_k$ and every $A\subseteq Q_k,$
    \begin{equation*}
        (K^{\star})^t(x,A)=\PP_x(S_t
        \in A) \geq \PP_x(S_t \in A, E_t) = \PP(E_t)\pi(A).
    \end{equation*}
    
    It remains to bound $\PP(E_t)$ from below. By the union bound, 
    \begin{equation*}
        \PP(E_t^c) \leq \sum_{i=1}^k \PP(\text{coordinate }i \text{ is never selected during the first }t \text{ steps}) = k\left(1-\dfrac{1}{k} \right)^t.
    \end{equation*}
    Using $1-u\leq e^{-u},$ we get $\PP(E^c_t)\leq k\exp(-{t}/{k})$, $t \ge 0$. With $t=t_0=k\lceil \ln(2k) \rceil,$ this yields $\PP(E^c_{t_0})\leq k\exp (-t_0/{k}) \leq k\exp(-\ln(2k))={1}/{2}$. Thus, $\PP(E_{t_0})\geq 1/2,$ and therefore $(K^{\star})^{t_0}(x,A)\geq \pi(A)/2$,
    which proves \eqref{het_eq:minorization}.
    
    We now derive the contraction estimate. By the standard minorization argument (see \cite{Rosenthal1995ConvergenceRates}), it follows that for every probability measure $\mu$ on $Q_k,$ 
    \begin{equation*} 
        ||\mu(K^{\star})^{mt_0}-\pi||_{TV} \leq \Big(\frac{1}{2}\Big)^m,\qquad m\geq 0,
    \end{equation*}
    which proves \eqref{het_eq:uniform-ergodicity-TV}. Finally, taking $\mu=\delta_x,$ we have 
    \begin{equation*}
        |((K^{\star})^{mt_0}g)(x)-\pi(g)|\leq 2||g||_{\infty}||\delta_x(K^{\star})^{mt_0}-\pi||_{\TV}.
    \end{equation*}
    Using \eqref{het_eq:uniform-ergodicity-TV} and taking the supremum over $x\in Q_k$ yields
    \begin{equation*}
        \sup_{x\in Q_k} \left|((K^{\star})^{mt_0}g)(x)-\pi(g) \right| \leq 2||g||_{\infty}2^{-m}, \qquad m\geq 0.
    \end{equation*}
\end{proof}

\section{Asymptotic stationarity and weak laws}\label{sec:5}
We now prove a weak law  of large numbers for bounded observables below in Theorem \ref{het_thm:WLLN}. The limiting measure is the uniform probability measure $\pi$ on the unit hypercube $Q_k$ (see section \ref{sec:4}). Note that $\pi$ is invariant for the refresh kernel $K^\star$ in \eqref{eq:refreshK}. The proof is organized in two steps. First, we prove a weak law along the block skeleton $(S_{bt_0})_{b\geq0}.$ Then we recover the full empirical average by decomposing each block into a martingale fluctuation and a predictable conditional mean. 

\begin{lemma}[Skeleton Weak Law] \label{het_lem:WLLN-skeleton}
    Assume that $\max_{1\le i \le k} p_i<1$. Let $t_0$ be the block length of \eqref{het_eq:minorization}. Then, for every bounded function $g:Q_k\to\R$, we have  
    \begin{equation} \label{eq:lem5.1}
        \dfrac{1}{M}\sum_{b=0}^{M-1}g(S_{bt_0}) \xrightarrow[M\to\infty]{\PP} \pi(g).
    \end{equation}
\end{lemma}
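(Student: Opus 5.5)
The plan is an $L^2$ (second-moment) argument along the skeleton, built from the finite-horizon comparison (Lemma \ref{het_lem:finite-horizon}) and the minorization bound \eqref{het_eq:uniform-ergodicity-functions}. Set $\bar g:=g-\pi(g)$ and $A_M:=M^{-1}\sum_{b=0}^{M-1}\bar g(S_{bt_0})$. It suffices to show $\E[A_M^2]\to0$, and then apply Chebyshev's inequality. Expanding the square gives
\begin{equation*}
\E[A_M^2]=\frac{1}{M^2}\sum_{b=0}^{M-1}\E[\bar g(S_{bt_0})^2]+\frac{2}{M^2}\sum_{0\le b<c\le M-1}\E\big[\bar g(S_{bt_0})\,\E[\bar g(S_{ct_0})\mid\F_{bt_0}]\big].
\end{equation*}
The diagonal term is at most $4\|g\|_\infty^2/M$. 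For the off-diagonal terms, write $c=b+m$ with $m\ge1$ and compare $\E[\bar g(S_{(b+m)t_0})\mid\F_{bt_0}]$ with $((K^\star)^{mt_0}\bar g)(S_{bt_0})$.

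\textbf{Bounding one cross term.} Lemma \ref{het_lem:finite-horizon} with $T=mt_0$ and \eqref{het_eq:uniform-ergodicity-functions} (using $\pi(\bar g)=0$) together give
\begin{equation*}
\big|\E[\bar g(S_{(b+m)t_0})\mid\F_{bt_0}]\big|\le 2\|\bar g\|_\infty 2^{-m}+2\|\bar g\|_\infty\E\Big[\sum_{r=0}^{mt_0-1}\delta_{bt_0+r}\;\Big|\;\F_{bt_0}\Big].
\end{equation*}
The error term grows linearly in $m$, so I would not use it for all lags. Instead, fix a cutoff $L\in\N$. For lags $m\le L$, use the bound above. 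For lags $m>L$, I would first try the following trick: condition on $\F_{(c-L)t_0}$ rather than $\F_{bt_0}$. Since $b<c-L$ when $m>L$, the variable $\bar g(S_{bt_0})$ is $\F_{(c-L)t_0}$-measurable, so the cross term equals
\begin{equation*}
\E\big[\bar g(S_{bt_0})\,\E[\bar g(S_{ct_0})\mid\F_{(c-L)t_0}]\big],
\end{equation*}
and this is bounded in absolute value by
\begin{equation*}
\|\bar g\|_\infty\Big(2\|\bar g\|_\infty2^{-L}+2\|\bar g\|_\infty\sum_{r=0}^{Lt_0-1}\E[\delta_{(c-L)t_0+r}]\Big).
\end{equation*}
The cross terms with $m\le L$ are handled similarly and are bounded by $C(2^{-m}+\sum_{r<Lt_0}\E[\delta_{bt_0+r}])$.

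\textbf{Summing.} Let $\varepsilon_j:=\sum_{r=0}^{Lt_0-1}\E[\delta_{jt_0+r}]$. By Lemma \ref{het_lem:tv-bound}, $\E[\delta_n]\to0$, so for fixed $L$ we have $\varepsilon_j\to0$ as $j\to\infty$, and Cesàro averages of $\varepsilon_j$ also vanish. Summing over pairs $b<c$ and dividing by $M^2$ then gives
\begin{equation*}
\limsup_{M\to\infty}\E[A_M^2]\le C\|g\|_\infty^2\,2^{-L},
\end{equation*}
because the pairs with $m\le L$ number only $O(LM)$, and for the remaining pairs each fixed $c$ contributes at most $M\varepsilon_{c-L}$, which has Cesàro average $o(M^2)$. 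Letting $L\to\infty$ concludes the proof.

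\textbf{Main obstacle.} The main difficulty is controlling the long-lag correlations. The comparison error in Lemma \ref{het_lem:finite-horizon} accumulates linearly with the horizon, so one cannot compare directly across lag $m$. The re-conditioning at time $(c-L)t_0$ fixes this: mixing only needs to be applied over a bounded horizon $L$, where the perturbation error is uniformly small for large times. The key point to verify carefully is the measurability of $\bar g(S_{bt_0})$ with respect to $\F_{(c-L)t_0}$, together with the bookkeeping of the Cesàro averages.
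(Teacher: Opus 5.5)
Your argument is correct and is essentially the paper's proof: re-conditioning $\bar g(S_{ct_0})$ on $\F_{(c-L)t_0}$ is exactly the paper's predictable part $P_c^{(m)}=\E[g(S_{ct_0})\mid\F_{(c-m)t_0}]$ with $m=L$, which is compared with $\pi(g)$ via Lemma~\ref{het_lem:finite-horizon} over the fixed horizon $Lt_0$ and the mixing bound \eqref{het_eq:uniform-ergodicity-functions}; Ces\`aro then handles $\E[\delta_s]\to0$, and the horizon is sent to infinity at the end. The only difference is bookkeeping: the paper splits $g(S_{bt_0})=W_b^{(m)}+P_b^{(m)}$ and controls the $W$-average in $L^2$ (orthogonality at lags $\ge m$) and the $P$-average in $L^1$, whereas you expand $\E[A_M^2]$ and bound the long-lag covariances directly, which gives $L^2$ convergence in a single estimate.
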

\begin{proof}
    For fixed integers $b\ge m$ and a  bounded  function $g$, we define 
    \begin{equation*}
        P_b^{(m)} :=\E[g(S_{bt_0})\mid\F_{(b-m)t_0}], \qquad W_b^{(m)}:=g(S_{bt_0})-P_b^{(m)}.
    \end{equation*}
 Then $\E[W_b^{(m)}\mid \F_{(b-m)t_0}]=0$, and $ |W_b^{(m)}|\leq 2\|g\|_{\infty}$. We write $g(S_{bt_0})=W_b^{(m)}+P_b^{(m)}$. 
 
 First, we control the first term $W_b^{(m)}$. Taking a large integer $c\geq b+m$ so that $bt_0\leq (c-m)t_0$, we see  
      \begin{equation*}
      \E\left[W_b^{(m)}W_c^{(m)}\right] = \E\left[W_b^{(m)}\E\left[W_c^{(m)}|\F_{(c-m)t_0}\right]\right]=0
    \end{equation*}
    since the random variable $W_b^{(m)}$ is $\F_{(c-m)t_0}$-measurable. 
    By symmetry, the covariance also vanishes when $b\geq c+m.$ Thus, only the pairs $(b,c)$ with $|b-c|<m$ can contribute to the sum. There are at most $M(2m-1)$ such ordered pairs. Therefore, we have an upper bound 
    \begin{equation*}
        \E\left[\left(\dfrac{1}{M}\sum_{b=m}^{M-1}W_b^{(m)}\right)^2\right]\leq \dfrac{4(2m-1)\|g\|_{\infty}^2}{M}.
    \end{equation*}
    For a fixed $m$, as $M\to \infty$, we obtain 
    \begin{equation}\label{het_eq:skeleton-fluctuation}
        \dfrac{1}{M}\sum_{b=m}^{M-1}W_b^{(m)}\longrightarrow 0 \qquad \text{in } L^2.
    \end{equation}
    
 Next, we control the second term $P_b^{(m)}$, using lemma \ref{het_lem:finite-horizon} with $n=(b-m)t_0$ and $T=mt_0$ to obtain 
    \begin{equation*}
        \left|P_b^{(m)} - ((K^{\star})^{mt_0}g)(S_{(b-m)t_0}) \right|\leq 2\|g\|_{\infty}\E\left[\sum_{s=(b-m)t_0}^{bt_0-1}\delta_s \mid \F_{(b-m)t_0} \right]
    \end{equation*}
    Averaging, taking absolute values, and then taking expectations, we obtain 
    \begin{equation} \label{eq:ExpAverage}
        \E\left[\left|\dfrac{1}{M}\sum_{b=m}^{M-1}\left(P_b^{(m)}- ((K^{\star})^{mt_0}g)(S_{(b-m)t_0})\right) \right| \right] \leq 2\dfrac{\|g\|_{\infty}}{M}\sum_{b=m}^{M-1}\sum_{s=(b-m)t_0}^{bt_0-1}\E[\delta_s].
    \end{equation}
    Each index $s$  belongs to at most $m$ of the intervals $[(b-m)t_0,bt_0-1]$ as they have length $mt_0$ and their left endpoints are separated by $t_0.$ Thus, the right-hand side of \eqref{eq:ExpAverage} is bounded by ${2m\|g\|_{\infty}}\sum_{s=0}^{Mt_0-1}\E[\delta_s]/M$. 
    Since $\E[\delta_s]\to0$ by Lemma \ref{het_lem:tv-bound}, Ces\`aro's theorem gives $\sum_{s=0}^{Mt_0-1}\E[\delta_s]/M\to0$ as $M\to \infty$. Therefore, for fixed $m$, we have $L^1$ convergence: 
    \begin{equation}\label{het_eq:skeleton-perturbation}
        \dfrac{1}{M}\sum_{b=m}^{M-1}\left(P_b^{(m)}- ((K^{\star})^{mt_0}g)(S_{(b-m)t_0})\right) \xrightarrow[M\to \infty]{L^1} 0 
    \end{equation}
    
    By the uniform mixing estimate \eqref{het_eq:uniform-ergodicity-functions}, 
    \begin{equation*}
        \left| \dfrac{1}{M}\sum_{b=m}^{M-1} ((K^{\star})^{mt_0}g)(S_{(b-m)t_0}) - \dfrac{M-m}{M}\pi(g)\right| \leq 2\|g\|_{\infty}2^{-m}.
    \end{equation*}
    The sum of the first $m$ terms satisfies 
    \begin{equation*}
        \left| \dfrac{1}{M}\sum_{b=0}^{m-1} (g(S_{bt_0}) - \pi(g))\right| \leq \frac{2m\|g \|_{\infty}}{M} \xrightarrow[M\to \infty]{} 0.
    \end{equation*}
    Combining this initial term bound with \eqref{het_eq:skeleton-fluctuation}, \eqref{het_eq:skeleton-perturbation} and the mixing estimate above, we get, for every fixed $m$ and every $\varepsilon>0,$
    \begin{equation*}
        \limsup_{M\to\infty}\PP\left(\left| \dfrac{1}{M}\sum_{b=0}^{M-1} g(S_{bt_0})-\pi(g)\right|>2\|g\|_{\infty}2^{-m}+\varepsilon \right)=0.
    \end{equation*}
    Since $m$ is arbitrary and $2\|g\|_{\infty}2^{-m}\to0$, as $m\to \infty$, the skeleton  weak law \eqref{eq:lem5.1} follows. 
    \end{proof}

\begin{lemma}\label{het_lem:block-replacement}
    Let $f:Q_k\to\R$ be bounded. Write $n=Mt_0+r,$ with $0\leq r <t_0.$ Define
    \begin{equation}\label{eq:lem5.2}
        A_n:=\frac{1}{n}\sum_{t=0}^{n-1}f(S_t) \quad \text{and} \quad \widetilde{A_n}:=\frac{1}{Mt_0}\sum_{b=0}^{M-1}\sum_{j=0}^{t_0-1} f(S_{bt_0+j}) , \quad n \ge 1 .  
    \end{equation}
    Then we have an estimate for the difference 
    \begin{equation*}
        |A_n-\widetilde{A_n}|\leq \dfrac{2t_0||f||_{\infty}}{n}, \quad n \ge 1 .  
    \end{equation*}
    In particular, $A_n-\widetilde{A_n}\to0$, as $n \to \infty$. 
\end{lemma}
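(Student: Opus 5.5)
The estimate is purely deterministic; no probabilistic input from the earlier sections is needed. First observe that the double sum in $\widetilde{A_n}$ is just $\sum_{t=0}^{Mt_0-1} f(S_t)$, since $(b,j)\mapsto bt_0+j$ is a bijection from $\{0,\dots,M-1\}\times\{0,\dots,t_0-1\}$ onto $\{0,\dots,Mt_0-1\}$. Writing $\Sigma_N:=\sum_{t=0}^{N-1}f(S_t)$, we therefore have $A_n=\Sigma_n/n$ and $\widetilde{A_n}=\Sigma_{Mt_0}/(Mt_0)$, with $n=Mt_0+r$. We tacitly need $M\ge1$ for $\widetilde{A_n}$ to be defined, i.e. $n\ge t_0$. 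For $n<t_0$ the claimed bound exceeds $2\|f\|_\infty$, and both averages are bounded by $\|f\|_\infty$ under any reasonable convention, so that case is trivial.

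For $M\ge1$ split the difference as
\begin{equation*}
A_n-\widetilde{A_n}=\frac{\Sigma_n-\Sigma_{Mt_0}}{n}+\Sigma_{Mt_0}\Big(\frac1n-\frac1{Mt_0}\Big).
\end{equation*}
The first term is a sum of $r$ values of $f$ divided by $n$, so its absolute value is at most $r\|f\|_\infty/n$. For the second term use $|\Sigma_{Mt_0}|\le Mt_0\|f\|_\infty$ and
\begin{equation*}
\Big|\frac1n-\frac1{Mt_0}\Big|=\frac{r}{nMt_0}.
\end{equation*}
Hence the second term is also bounded by $r\|f\|_\infty/n$. Adding the two estimates and using $r<t_0$ gives
\begin{equation*}
|A_n-\widetilde{A_n}|\le \frac{2r\|f\|_\infty}{n}\le\frac{2t_0\|f\|_\infty}{n}.
\end{equation*}

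Since $t_0$ and $\|f\|_\infty$ are fixed, the right-hand side tends to $0$ as $n\to\infty$, which gives $A_n-\widetilde{A_n}\to0$ (surely, hence in every mode). There is no real obstacle here. The only points requiring care are the reindexing of the double sum and the degenerate case $M=0$.
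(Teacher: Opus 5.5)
Your proof is correct and matches the paper's argument: the same split into $\bigl(\frac1n-\frac1{Mt_0}\bigr)\Sigma_{Mt_0}$ plus the $r$-term remainder, each bounded by $r\|f\|_\infty/n$, followed by $r<t_0$. Your explicit reindexing of the double sum and your handling of the degenerate case $M=0$ (i.e.\ $n<t_0$, where $\widetilde{A_n}$ is undefined and the paper says nothing) are sound additions.
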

\begin{proof}
    With $C_M:=\sum_{t=0}^{Mt_0-1}f(S_t)$ and   $R_n:=\sum_{t=Mt_0}^{n-1}f(S_t)$, the difference $A_n - \widetilde{A_n}$ can be rewritten as
    \begin{equation*}
        A_n-\widetilde{A_n}=\left(\dfrac{1}{n}-\dfrac{1}{Mt_0}\right)C_M+\dfrac{1}{n}R_n. 
    \end{equation*}
         Since $|C_M|\leq Mt_0\|f\|_{\infty}, |R_n|\leq r\|f\|_{\infty}$ and $r<t_0,$ we have the estimate 
    \begin{equation*}
        |A_n-\widetilde{A_n}|\leq \dfrac{r}{nMt_0}Mt_0\|f\|_{\infty}+\dfrac{r}{n}\|f\|_\infty \leq \dfrac{2t_0\|f\|_{\infty}}{n}\to 0, \quad \text{as } n\to\infty.
    \end{equation*}
\end{proof}
\begin{theorem}[Weak Law of Large Numbers] \label{het_thm:WLLN}
    Assume $\max_{1\le i \le k } p_i<1$. Then, for every bounded function $f:Q_k\to\R,$ we have the weak law of large numbers 
    \begin{equation*}
        \frac{1}{n}\sum_{t=0}^{n-1}f(S_t)\xrightarrow[n \to \infty]{\PP}\pi(f) . 
    \end{equation*}
\end{theorem}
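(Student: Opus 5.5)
By Lemma \ref{het_lem:block-replacement}, $A_n-\widetilde{A_n}\to 0$ deterministically. It therefore suffices to show $\widetilde{A_n}\to\pi(f)$ in probability as $M\to\infty$, with $M=\lfloor n/t_0\rfloor$. I will write
\begin{equation*}
\widetilde{A_n}=\frac{1}{t_0}\sum_{j=0}^{t_0-1}\Big(\frac{1}{M}\sum_{b=0}^{M-1} f(S_{bt_0+j})\Big).
\end{equation*}
Since $t_0$ is fixed and finite, it is enough to prove, for each fixed offset $j\in\{0,\dots,t_0-1\}$, that $\frac1M\sum_{b=0}^{M-1}f(S_{bt_0+j})\to\pi(f)$ in probability. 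A finite average of sequences each converging in probability to $\pi(f)$ converges in probability to $\pi(f)$.

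For the offset $j$, I reduce to the skeleton. Each term is decomposed as
\begin{equation*}
f(S_{bt_0+j})=\big(f(S_{bt_0+j})-\E[f(S_{bt_0+j})\mid\F_{bt_0}]\big)+\E[f(S_{bt_0+j})\mid\F_{bt_0}].
\end{equation*}
\begin{itemize}
\item[(i)] \emph{Fluctuation term.} Denote the first bracket by $D_b^{(j)}$. It is $\F_{(b+1)t_0}$-measurable, bounded by $2\|f\|_\infty$, and has zero conditional mean given $\F_{bt_0}$. Hence for $c\ge b+1$ the cross terms vanish: $\E[D_b^{(j)}D_c^{(j)}]=0$. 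This gives $\E[(\frac1M\sum_b D_b^{(j)})^2]\le 4\|f\|_\infty^2/M\to0$.
\item[(ii)] \emph{Predictable term.} By Lemma \ref{het_lem:finite-horizon} with $n=bt_0$ and $T=j$ (for $j=0$ there is nothing to do), the conditional mean equals $((K^\star)^jf)(S_{bt_0})$ up to an error whose expectation is at most $2\|f\|_\infty\sum_{r=0}^{j-1}\E[\delta_{bt_0+r}]$. Averaging over $b$ and using $\E[\delta_s]\to0$ (Lemma \ref{het_lem:tv-bound}) with Cesàro's theorem, the averaged error tends to $0$ in $L^1$.
\item[(iii)] \emph{Skeleton term.} The function $g:=(K^\star)^jf$ is deterministic and bounded, so Lemma \ref{het_lem:WLLN-skeleton} gives $\frac1M\sum_b g(S_{bt_0})\to\pi(g)$ in probability. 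Invariance of $\pi$ under $K^\star$ gives $\pi(g)=\pi(f)$.
\end{itemize}
Combining (i)–(iii) yields the offset-$j$ law, and then the theorem.

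The only delicate point is (ii): the error bound from Lemma \ref{het_lem:finite-horizon} is a conditional expectation of the $\delta$'s. Taking expectations and summing over $b$ handles this, since each index $s$ is used at most once per offset. I expect no real obstacle beyond this; the substantive work was already done in Lemma \ref{het_lem:WLLN-skeleton}.
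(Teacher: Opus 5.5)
Your proof is correct and follows the paper's argument: the paper also reduces to $\widetilde{A_n}$, isolates a martingale-difference fluctuation relative to $\F_{bt_0}$, controls the predictable part with Lemma \ref{het_lem:finite-horizon} and Ces\`aro, and concludes with Lemma \ref{het_lem:WLLN-skeleton}. The only difference is cosmetic. The paper sums over offsets first, using the block sums $B_b$ and the single function $h=\frac{1}{t_0}\sum_{j=0}^{t_0-1}(K^\star)^j f$, whereas you handle each offset with $g=(K^\star)^j f$ and average at the end; the two are equivalent by linearity.
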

\begin{proof}
    Write $n=Mt_0+r,$ where $t_0$ is the block size chosen as in Lemma \ref{het_lem:WLLN-skeleton} and $r$ is the remainder with $0\leq r < t_0$.  By lemma \ref{het_lem:block-replacement}, it is enough to prove that $ \widetilde{A_n}$ in \eqref{eq:lem5.2} 
    converges in probability to $\pi(f).$ For $j=0,\dots,t_0-1,$ we  define 
    \begin{equation} \label{eq:hx}
        h(x):=\frac{1}{t_0}\sum_{j=0}^{t_0-1}((K^{\star})^jf)(x),
    \end{equation}
    with the convention that $(K^{\star})^0f=f$. Note that since $\pi$ is invariant for $K^{\star},$
            $\pi(h)=\sum_{j=0}^{t_0-1}\pi((K^\star)^j f)/{t_0}=\pi(f)$.
            
    For each block $b = 0, 1, 2, \ldots ,$ we set the block sum $B_b :=\sum_{j=0}^{t_0-1}f(S_{bt_0+j})$, its conditional expectation $\bar{B}_b:=\E[B_b|\mathcal{F}_{bt_0}] = \sum_{j=0}^{t_0-1} \E [ f(S_{bt_0+j}) \vert \mathcal F_{bt_0}] $ and the deviation  $U_b:= B_b-\bar{B}_b$. 
    Let $\G_b:=\F_{bt_0}$ for $b = 0, 1,  \ldots $. Then $U_b$ is a $\G_{b+1}$-measurable random variable with $\E[U_b|\G_b]=0.$ Thus, $(U_b,\G_{b+1})_{b\geq0}$ is a martingale difference sequence. If $b<c,$ then $U_b$ is $\G_c$-measurable and hence $\E[U_bU_c]=\E[U_b\E[U_c\mid \G_c]]=0.$ Since $|U_b|\leq 2t_0\|f\|_{\infty},$ we have the upper bound 
     \begin{equation*}
        \E\left[\left(\frac{1}{Mt_0}\sum_{b=0}^{M-1}U_b\right)^2\right]=\dfrac{1}{M^2t_0^2}\sum_{b=0}^{M-1}\E[U_b^2]\leq\frac{4\|f\|_\infty^2}{M}\xrightarrow[M\to \infty]{} 0.
    \end{equation*}
    Thus, the average of the block sum $B_\cdot$ is approximated by the average of $\bar{B}_\cdot$ in $L^2$:  
    \begin{equation}\label{het_eq:block-fluctuation}
        \frac{1}{Mt_0}\sum_{b=0}^{M-1}(B_b-\bar{B}_b)\xrightarrow[M\to\infty]{L^2}0.
    \end{equation}
    
    It remains to compare the predictable block averages $\bar{B}_\cdot/t_0$ with  $h(S_{bt_0})$ from \eqref{eq:hx}. In the expression of $ \bar{B}_b$, the summand $ \mathbb E [ f(S_{b t_0 + j}) \vert \mathcal F_{b t_0}] $ becomes  $\E[f(S_{bt_0})\mid \F_{bt_0}]=f(S_{bt_0})=((K^{\star})^0f)(S_{bt_0})$ for $j = 0$; It becomes 
    \begin{equation*}
        |\E[f(S_{bt_0+j})\mid \F_{bt_0}]- ((K^{\star})^jf)(S_{bt_0})| \leq 2\|f\|_{\infty}\E\left[ \sum_{i=0}^{j-1} \delta_{bt_0+i}\mid \F_{bt_0}\right] , 
    \end{equation*}
     thanks to Lemma \ref{het_lem:finite-horizon}  with $n=bt_0$ and $T=j$ for $1 \leq j \leq t_0-1$. 
     Averaging over $j=0, 1,\dots,t_0-1$ and dividing by $t_0,$ we obtain
    \begin{equation}\label{het_eq:block-predictable-bound}
    \left|\dfrac{\bar{B}_b}{t_0}-h(S_{bt_0})\right|\leq 2\|f\|_{\infty}\E\left[ \sum_{i=0}^{t_0-1}\delta_{bt_0+i}\mid \F_{bt_0}\right].
    \end{equation}
    Moreover, averaging \eqref{het_eq:block-predictable-bound} over $b$ and taking expectations yields 
    \begin{equation*}
        \E\left[\left|\frac{1}{Mt_0}\sum_{b=0}^{M-1}\bar{B}_b-\frac{1}{M}\sum_{b=0}^{M-1}h(S_{bt_0}) \right|\right] \leq \frac{2||f||_{\infty}}{M}\sum_{s=0}^{Mt_0-1}\E[\delta_{s}].
    \end{equation*}
    Since $\E[\delta_s]\to0$ by Lemma \ref{het_lem:tv-bound}, the upper bound in the right-hand side converges to zero by Ces\`aro's theorem, and therefore,
    \begin{equation}\label{het_eq:block-perturbation}
        \frac{1}{Mt_0}\sum_{b=0}^{M-1}\bar{B}_b-\frac{1}{M}\sum_{b=0}^{M-1}h(S_{bt_0}) \xrightarrow[M\to \infty]{L^1} 0 . 
    \end{equation}
    Combining \eqref{het_eq:block-fluctuation} and \eqref{het_eq:block-perturbation} gives $\widetilde{A}_n - {M}^{-1}\sum_{b=0}^{M-1}h(S_{bt_0}) \xrightarrow[n\to\infty]{\PP}0$.

    Thus, by Lemma \ref{het_lem:WLLN-skeleton} ${M}^{-1}\sum_{b=0}^{M-1}h(S_{bt_0}) \xrightarrow[M\to\infty]{\PP}\pi(h)=\pi(f)$.
    Moreover, $\widetilde{A}_n  \xrightarrow[n\to\infty]{\PP}\pi(f).$ Finally, we conclude by Lemma \ref{het_lem:block-replacement} that 
    \begin{equation*}
        A_n  = \frac{1}{n} \sum_{t=0}^{n-1} f(S_t) \xrightarrow[n\to\infty]{\PP}\pi(f).
    \end{equation*}
\end{proof}

\section{Functional CLT for Bounded Observables}\label{sec:6}
In this section we shall prove the functional central limit theorem for the CERW $S_\cdot$ on the hypercube $Q_k$. Fix a bounded function $f: Q_k \to \R$ and define $M_0^{(f)}  = 0$, 
\begin{equation}
\begin{split}
    M_n^{(f)}:&= f(S_n)-f(S_0)-\sum_{t=0}^{n-1} \E \left[ f(S_{t+1})-f(S_t) 
    \mid \F_{t} \right] 
    = \sum_{t=0}^{n-1} D_{t+1}^{(f)} 
    \end{split}
\end{equation}
for $n \ge 1$, where $D_{t+1}^{(f)}:= f(S_{t+1})-f(S_t)- \E \left[ f(S_{t+1})-f(S_t) \mid \F_{t} \right]$, $t \ge 0 $. 
\begin{proposition}\label{het_prop:doob_martingale}
    $(M_n^{(f)}, \F_n)_{n\ge 0}$ is a square integrable martingale with predictable quadratic variation 
    $\langle M^{(f)}\rangle_n = \sum_{t=0}^{n-1} \Var(f(S_{t+1})\mid \F_t)$, $n \ge 1$ 
and  bounded martingale difference sequence $D^{(f)}_\cdot$ with $|D_{t+1}^{(f)}| \leq 4 ||f||_{\infty}$, $t \ge 0$. 
\end{proposition}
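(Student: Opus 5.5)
The proof is a direct verification from the definitions, and I would carry it out in three short steps.

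\emph{Adaptedness, integrability and the martingale property.} Each $D_{t+1}^{(f)}$ is a function of $S_t$, $S_{t+1}$ and an $\F_t$-conditional expectation, so it is $\F_{t+1}$-measurable. Hence $M_n^{(f)}$ is $\F_n$-adapted. Since $f$ is bounded, every term is bounded, so integrability and square integrability are immediate once the increment bound below is established.

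Because $f(S_t)$ is $\F_t$-measurable, we have $\E[f(S_{t+1})-f(S_t)\mid\F_t]=\E[f(S_{t+1})\mid\F_t]-f(S_t)$. This simplifies the increment to
\begin{equation*}
D_{t+1}^{(f)}=f(S_{t+1})-\E[f(S_{t+1})\mid\F_t]=f(S_{t+1})-(K_tf)(S_t),
\end{equation*}
using \eqref{het_eq:one-step-kernel}. Then $\E[D_{t+1}^{(f)}\mid\F_t]=0$, so $M_n^{(f)}=\sum_{t<n}D_{t+1}^{(f)}$ is a martingale with respect to $(\F_n)$. The telescoping identity $M_n^{(f)}=f(S_n)-f(S_0)-\sum_{t<n}\E[f(S_{t+1})-f(S_t)\mid\F_t]$ is just the sum of these increments.

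\emph{Bound on the increments.} From the simplified form, $|D_{t+1}^{(f)}|\le |f(S_{t+1})|+|\E[f(S_{t+1})\mid\F_t]|\le 2\|f\|_\infty$. This is already sharper than the stated bound $4\|f\|_\infty$. If one insists on the unsimplified form, the four terms each contribute at most $\|f\|_\infty$, which gives $4\|f\|_\infty$ directly. Either way the stated bound holds, and boundedness of the increments yields square integrability of $M_n^{(f)}$ for each fixed $n$.

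\emph{Predictable quadratic variation.} By definition, $\langle M^{(f)}\rangle_n=\sum_{t<n}\E[(D_{t+1}^{(f)})^2\mid\F_t]$. Since $D_{t+1}^{(f)}=f(S_{t+1})-\E[f(S_{t+1})\mid\F_t]$, each summand is exactly $\Var(f(S_{t+1})\mid\F_t)$. Finally, the process $(M_n^{(f)})^2-\langle M^{(f)}\rangle_n$ is a martingale, by the orthogonality of martingale increments.

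There is no real obstacle here. The only point needing care is recognizing that the $f(S_t)$ terms cancel inside the conditional expectation, which reduces $D^{(f)}$ to a centered version of $f(S_{t+1})$ and makes the variance identity transparent.
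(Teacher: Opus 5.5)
Your proposal is correct and follows the same direct verification as the paper. The paper bounds $f(S_{t+1})-f(S_t)$ and its conditional expectation separately to get $4\|f\|_\infty$, whereas cancelling the $\F_t$-measurable term $f(S_t)$ gives you the sharper $2\|f\|_\infty$; you also explicitly check $\E[(D_{t+1}^{(f)})^2\mid\F_t]=\Var(f(S_{t+1})\mid\F_t)$, which the paper leaves implicit.
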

\begin{proof}
Since $f$ is bounded, $D_{t+1}^{(f)}$ is square integrable. By definition $\E[D_{t+1}^{(f)}|\F_t]=0$. Thus, $M_n^{(f)}=\sum_{t=0}^{n-1}D_{t+1}^{(f)}$ is a square integrable martingale with respect to filtration  $(\F_n)$. 
    $|f(S_{t+1})-f(S_t)|\leq 2\|f\|_{\infty}$  and  $|\E[f(S_{t+1})-f(S_t)|\F_t] |\leq 2\|f\|_{\infty}$ imply  $|D_{t+1}^{(f)}| \leq 4 \|f\|_{\infty}$. 
\end{proof}
We shall see that the conditional variance $\text{Var}(f(S_{t+1}) \vert \mathcal F_t) $ should  asymptotically behave as the corresponding variance of the Markov transition under the refresh kernel $K^{\star}$, since the memory biases $\widehat{m}^{(i)}_\cdot$ vanish as in Corollary \ref{het_cor:almost-sure-vanish}.  For $x \in Q_k,$ we define the variance 
\begin{equation} \label{eq:vfx}
    v_f(x):=\Var_{K^{\star}(x,\cdot)}(f) = K^{\star}f^2(x)-(K^{\star}f(x))^2.
\end{equation}

\begin{lemma}\label{het_lem:variance-comparison}
    There exists $C_f<\infty$ such that
    \begin{equation*}
        \left |\Var(f(S_{t+1}) \mid \F_t)-v_f(S_t) \right| \leq C_f \max_{1 \leq i \leq k} |\widehat{m}_t^{(i)}|, \quad t \ge 0 , 
    \end{equation*}
    where $\widehat{m}_t^{(i)}$ is the memory bias defined in \eqref{eq:mhat}. In particular, if $\max_{1\le i \le k} p_i<1$, then the conditional variance is approximated by $v_f$ in \eqref{eq:vfx}: 
    \begin{equation*}
        \left |\Var(f(S_{t+1}) \mid \F_t)-v_f(S_t) \right| \xrightarrow[t\to \infty]{a.s} 0 . 
    \end{equation*}
\end{lemma}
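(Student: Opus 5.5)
The plan is to write the conditional variance through the random kernel $K_t$ and then compare it term by term with $v_f$, using the total variation estimate of Lemma \ref{het_lem:tv-bound}.

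By the one-step identity \eqref{het_eq:one-step-kernel}, applied to both $g=f$ and $g=f^2$ (each bounded),
\begin{equation*}
\Var(f(S_{t+1})\mid\F_t)=(K_tf^2)(S_t)-\big((K_tf)(S_t)\big)^2 .
\end{equation*}
Subtracting $v_f(S_t)=(K^\star f^2)(S_t)-((K^\star f)(S_t))^2$ splits the difference into two pieces:
\begin{equation*}
\big[(K_tf^2)(S_t)-(K^\star f^2)(S_t)\big]-\big[(K_tf)(S_t)-(K^\star f)(S_t)\big]\big[(K_tf)(S_t)+(K^\star f)(S_t)\big].
\end{equation*}

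Each piece is controlled by the inequality $|\mu(h)-\nu(h)|\le 2\|h\|_\infty\|\mu-\nu\|_{\TV}$, already used in the base case of Lemma \ref{het_lem:finite-horizon}.

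\begin{itemize}
\item The first bracket is at most $2\|f\|_\infty^2\delta_t$.
\item In the product, the first factor is at most $2\|f\|_\infty\delta_t$. The second factor is at most $2\|f\|_\infty$, since both $K_t$ and $K^\star$ are Markov kernels.
\end{itemize}

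Adding these gives $|\Var(f(S_{t+1})\mid\F_t)-v_f(S_t)|\le 6\|f\|_\infty^2\delta_t$. Lemma \ref{het_lem:tv-bound} gives $\delta_t\le\frac12\max_i|\widehat m_t^{(i)}|$, so the first claim holds with $C_f=3\|f\|_\infty^2$.

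For the almost sure statement, Corollary \ref{het_cor:almost-sure-vanish} gives $\max_i|\widehat m_t^{(i)}|\to0$ almost surely when $\max_i p_i<1$. The bound just proved then yields the second claim directly.

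The only point needing care is the kernel representation of the conditional law of $S_{t+1}$ given $\F_t$. It must hold for $f^2$ as well as $f$, which is true because \eqref{het_eq:one-step-kernel} holds for every bounded $g$. The rest is routine algebra, so I expect no real obstacle.
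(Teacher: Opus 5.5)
Your proposal is correct and follows the paper's proof essentially line for line. You use the same kernel representation of the conditional variance, the same split into the $f^2$ term and the difference-of-squares term, the same bound $6\|f\|_\infty^2\delta_t$, the same constant $C_f=3\|f\|_\infty^2$ via Lemma~\ref{het_lem:tv-bound}, and the same appeal to Corollary~\ref{het_cor:almost-sure-vanish} for the almost sure statement.
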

\begin{proof}
    Let $K_t$ denote the conditional transition kernel of the CERW at time $t.$ Then comparing  $\Var(f(S_{t+1}) \mid \F_t) = K_tf^2(S_t)-(K_tf(S_t))^2$ with $v_f(S_t)= K^{\star}f^2(S_t)-(K^{\star}f(S_t))^2$, 
    we have  
    \begin{equation*}
        \left| \Var(f(S_{t+1})\mid \F_t)-v_f(S_t) \right| \leq \left|K_t f^2(S_t)-K^\star f^2(S_t)\right|+\left|\bigl(K_t f(S_t)\bigr)^2-\bigl(K^\star f(S_t)\bigr)^2\right|.
    \end{equation*}
    Since both $K_tf(S_t)$ and $K^{\star}f(S_t)$ are bounded in absolute value by $\|f\|_{\infty},$ we have
    \begin{align*}
        \left|\bigl(K_t f(S_t)\bigr)^2-\bigl(K^\star f(S_t)\bigr)^2\right| &\leq \left|K_t f(S_t)-K^\star f(S_t)\right|\left(\left|K_t f(S_t)\right|+\left|K^\star f(S_t)\right|\right)\\
        &\leq 4\|f\|_{\infty}^2\delta_t.
    \end{align*}
    Combining with the total variation bounds  $\left|K_t f^2(S_t)-K^\star f^2(S_t)\right| \leq 2\|f^2\|_{\infty}\delta_t = 2\| f\|_{\infty}^2\delta_t$ and         $\left|K_t f(S_t)-K^\star f(S_t)\right|\leq 2\|f\|_{\infty}\delta_t$
    we obtain 
    \begin{equation*}
        \left|\Var(f(S_{t+1}) \mid \F_t)-v_f(S_t) \right| \leq 6\|f\|_{\infty}^2\delta_t, \quad t \ge 0 . 
    \end{equation*}
    By Lemma \ref{het_lem:tv-bound} and setting $C_f=3\|f\|_{\infty}^2,$ the upper bound of this lemma holds. Almost sure convergence follows directly from  Corollary \ref{het_cor:almost-sure-vanish}.
\end{proof}
\subsection{Asymptotic quadratic variation}
\begin{proposition}\label{het_prop:quadratic-variation}
    Assume $\max_{1\le i \le k} p_i<1$. Then the quadratic variation $\langle M^{(f)} \rangle_\cdot$ satisfies 
    \begin{equation*}
        \frac{1}{n}\langle M^{(f)}\rangle_n \xrightarrow[n \to \infty]{\PP} 
        \sigma_f^2:=\pi(v_f).
    \end{equation*}
\end{proposition}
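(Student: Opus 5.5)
The plan is to split $n^{-1}\langle M^{(f)}\rangle_n-\pi(v_f)$ into a memory-perturbation part and an ergodic-average part. By Proposition \ref{het_prop:doob_martingale},
\begin{equation*}
\frac{1}{n}\langle M^{(f)}\rangle_n-\pi(v_f)=\frac{1}{n}\sum_{t=0}^{n-1}\Bigl(\Var(f(S_{t+1})\mid\F_t)-v_f(S_t)\Bigr)+\Bigl(\frac{1}{n}\sum_{t=0}^{n-1}v_f(S_t)-\pi(v_f)\Bigr).
\end{equation*}
I will show that each bracket tends to $0$ in probability.

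For the first bracket, use the bound from the proof of Lemma \ref{het_lem:variance-comparison}: each summand is at most $6\|f\|_\infty^2\delta_t$ in absolute value. Lemma \ref{het_lem:tv-bound} gives $\delta_t\to0$ almost surely. Since a Ces\`aro average of an almost surely null sequence is itself almost surely null, the first bracket tends to $0$ almost surely, and hence in probability. Alternatively, one can take expectations and use $\E[\delta_t]\to0$ together with Ces\`aro to get $L^1$ convergence.

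For the second bracket, note that $v_f=K^\star f^2-(K^\star f)^2$ is a bounded function on $Q_k$, with $\|v_f\|_\infty\le 2\|f\|_\infty^2$ (in fact $\le\|f\|_\infty^2$). The Weak Law of Large Numbers (Theorem \ref{het_thm:WLLN}) applies to $v_f$ directly and gives $n^{-1}\sum_{t=0}^{n-1}v_f(S_t)\to\pi(v_f)$ in probability. Adding the two limits proves the claim.

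There is no serious obstacle, since the heavy lifting is done by Theorem \ref{het_thm:WLLN}. The only point to check is that $v_f$ is a deterministic bounded function of the current state, so that the WLLN applies. This holds because $K^\star$ is a fixed kernel.
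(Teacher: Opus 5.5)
Your proposal is correct and matches the paper's proof: both split the quantity into the ergodic average of $v_f$, handled by Theorem~\ref{het_thm:WLLN}, and the variance-perturbation term, handled by Lemma~\ref{het_lem:variance-comparison} together with a pathwise Ces\`aro argument. The only difference is that you bound the perturbation by $6\|f\|_\infty^2\delta_t$ instead of $C_f\max_i|\widehat{m}_t^{(i)}|$, and this changes nothing.
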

\begin{proof}
    We shall approximate the conditional variance using Lemma \ref{het_lem:variance-comparison}. By definition, 
    \begin{equation*}
        \frac{1}{n}\langle M^{(f)}\rangle_n = \frac{1}{n}\sum_{t=0}^{n-1}\Var(f(S_{t+1})|\F_t) = \frac{1}{n}\sum_{t=0}^{n-1}v_f(S_t)+ \frac{1}{n}\sum_{t=0}^{n-1}\left[\Var(f(S_{t+1})|\F_t)- v_f(S_t)\right].
    \end{equation*}
    Since the function $v_f$ is bounded, by Theorem \ref{het_thm:WLLN}, the first term converges in probability: 
        ${n}^{-1}\sum_{t=0}^{n-1}v_f(S_t) \xrightarrow{\PP}\pi(v_f)=\sigma_f^2$.
    For the second term, Lemma \ref{het_lem:variance-comparison} gives
    \begin{equation*}
        \left| \frac{1}{n}\sum_{t=0}^{n-1}\left[\Var(f(S_{t+1})|\F_t)- v_f(S_t)\right] \right| \leq \frac{C_f}{n}\sum_{t=0}^{n-1} \max_{1 \leq i \leq k} |\widehat{m}_t^{(i)}|.
    \end{equation*}
    Since $\max_{1 \leq i \leq k} |\widehat{m}_t^{(i)}|$ is bounded and vanishes almost surely,  as $t \to \infty$ by Corollary \ref{het_cor:almost-sure-vanish}, Ces\`aro's theorem implies
     ${n}^{-1}\sum_{t=0}^{n-1} \max_{1 \leq i \leq k} |\widehat{m}_t^{(i)}|\xrightarrow[n\to \infty]{a.s.} 0. $ 
    Thus, the second term converges to $0$ almost surely and hence in probability. Combining the two limits proves the result. 
\end{proof}
\begin{corollary}\label{het_col:sn}
    $s_n^2:=\E\left[ \langle M^{(f)}\rangle_n\right]$, $n \ge 1$ satisfies $\lim_{n\to \infty} s_n^2 / n = \sigma^2_f$.  If $\sigma_f^2>0,$ then  
    \begin{equation*}
\frac{\langle M^{(f)}\rangle_n}{s_n^2} =\dfrac{\langle M^{(f)}\rangle_n/n}{s_n^2/n} \xrightarrow[n \to \infty]{\PP} 1.
    \end{equation*}
\end{corollary}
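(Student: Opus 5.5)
The natural route is to upgrade the convergence in probability of Proposition \ref{het_prop:quadratic-variation} to convergence in $L^1$ of $n^{-1}\langle M^{(f)}\rangle_n$. The key tool is uniform boundedness. Each summand satisfies $0\le \Var(f(S_{t+1})\mid\F_t)\le \|f\|_\infty^2$, so
\begin{equation*}
0\le \frac{1}{n}\langle M^{(f)}\rangle_n\le \|f\|_\infty^2, \qquad n\ge 1 .
\end{equation*}
The family $\{n^{-1}\langle M^{(f)}\rangle_n\}_{n\ge1}$ is therefore uniformly bounded and hence uniformly integrable. By Proposition \ref{het_prop:quadratic-variation} it converges in probability to the deterministic constant $\sigma_f^2=\pi(v_f)$. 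The bounded convergence theorem (Vitali's theorem, in its convergence-in-probability form) then gives $L^1$ convergence. In particular,
\begin{equation*}
\frac{s_n^2}{n}=\E\Big[\frac{1}{n}\langle M^{(f)}\rangle_n\Big]\longrightarrow \sigma_f^2 .
\end{equation*}
This proves the first claim. Alternatively, one could argue directly: take expectations of the decomposition used in the proof of Proposition \ref{het_prop:quadratic-variation}, and bound the error term by $6\|f\|_\infty^2 n^{-1}\sum_{t<n}\E[\delta_t]\to0$ using Lemma \ref{het_lem:tv-bound} and Ces\`aro. The main term $n^{-1}\sum_{t<n}\E[v_f(S_t)]\to\pi(v_f)$ then follows from Theorem \ref{het_thm:WLLN} and bounded convergence. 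Either way, only the boundedness of $v_f$ and of the conditional variances is needed.

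For the second claim, assume $\sigma_f^2>0$. Then $s_n^2/n\to\sigma_f^2>0$, so $s_n^2>0$ for all large $n$ and the ratio is well defined eventually. Write
\begin{equation*}
\frac{\langle M^{(f)}\rangle_n}{s_n^2}=\frac{\langle M^{(f)}\rangle_n/n}{s_n^2/n}.
\end{equation*}
The numerator converges in probability to $\sigma_f^2$ by Proposition \ref{het_prop:quadratic-variation}. The denominator is a deterministic sequence converging to $\sigma_f^2\neq0$. The continuous mapping theorem, applied to $(a,b)\mapsto a/b$ near $(\sigma_f^2,\sigma_f^2)$, or equivalently Slutsky's lemma, then gives convergence in probability to $1$.

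There is essentially no obstacle here. The only points needing care are noting that the conditional variances are uniformly bounded, so that convergence in probability of the normalized bracket transfers to convergence of its expectation, and ensuring that $s_n^2$ is nonzero for large $n$ before dividing. Both follow immediately from the bounds already recorded in Proposition \ref{het_prop:doob_martingale} and Lemma \ref{het_lem:variance-comparison}.
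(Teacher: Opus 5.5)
Your proposal is correct and follows essentially the paper's proof: uniform boundedness of $n^{-1}\langle M^{(f)}\rangle_n$ upgrades the convergence in probability of Proposition \ref{het_prop:quadratic-variation} to $L^1$, giving $s_n^2/n\to\sigma_f^2$, and the ratio then converges by Slutsky. You use the sharper bound $\|f\|_\infty^2$ where the paper uses $16\|f\|_\infty^2$ (from $|D_{t+1}^{(f)}|\le 4\|f\|_\infty$), and you check that $s_n^2>0$ for large $n$, which the paper leaves implicit.
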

\begin{proof}
    Since $\, 0 \leq \langle M^{(f)}\rangle_n /n \leq 16 \|f \|_{\infty}^2$, the sequence $n^{-1}\langle M^{(f)}\rangle_n$ is uniformly bounded and,  by Proposition \ref{het_prop:quadratic-variation}, it converges in probability to  $\sigma_f^2$. Thus, it also converges in $L^1$, namely, $\lim_{n\to \infty} {s_n^2}/{n}= \lim_{n\to \infty} \E\left[\langle M^{(f)}\rangle_n /n \right]=  \sigma_f^2$. 
    Therefore, if $\sigma_f^2>0,$ then the ratio ${\langle M^{(f)}\rangle_n}/{s_n^2}$ converges in probability. 
\end{proof} 
\subsection{Central Limit Theorem}
\begin{theorem}
    Assume $\max_{1\le i \le k} p_i<1$.  Let $f:Q_k \to \R$ be bounded. If $\sigma_f^2>0,$ then
    \begin{equation*}
        \frac{M_n^{(f)}}{s_n}\Rightarrow \mathcal{N}(0,1) \quad 
    \text{or equivalently, } \quad 
        \frac{M_n^{(f)}}{\sqrt{n}}\Rightarrow \mathcal{N}(0,\sigma_f^2), \quad \text{ as } n \to \infty.
    \end{equation*}
\end{theorem}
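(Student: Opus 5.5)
The plan is to apply the martingale central limit theorem for arrays of martingale differences (in the form of Hall--Heyde, or Brown's theorem) to the triangular array $\xi_{n,t}:=D_t^{(f)}/s_n$, $1\le t\le n$, adapted to $(\F_t)$. That theorem needs two inputs: convergence of the normalized conditional variance, and a conditional Lindeberg condition.

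\textbf{Conditional variance.} Since $\E[(D_{t+1}^{(f)})^2\mid\F_t]=\Var(f(S_{t+1})\mid\F_t)$ (the term $f(S_t)$ is $\F_t$-measurable), we have
\[
\sum_{t=1}^n \E[\xi_{n,t}^2\mid\F_{t-1}]=\frac{\langle M^{(f)}\rangle_n}{s_n^2}.
\]
By Corollary \ref{het_col:sn}, this ratio converges in probability to $1$ whenever $\sigma_f^2>0$.

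\textbf{Lindeberg condition.} This is straightforward because the increments are uniformly bounded. Proposition \ref{het_prop:doob_martingale} gives $|D_t^{(f)}|\le 4\|f\|_\infty$. Corollary \ref{het_col:sn} gives $s_n^2/n\to\sigma_f^2>0$, so $s_n\to\infty$. Hence for any $\varepsilon>0$ there is $n_0$ with $4\|f\|_\infty<\varepsilon s_n$ for all $n\ge n_0$. For such $n$ every indicator $\1_{\{|\xi_{n,t}|>\varepsilon\}}$ vanishes identically, so
\[
\sum_{t=1}^n\E\bigl[\xi_{n,t}^2\1_{\{|\xi_{n,t}|>\varepsilon\}}\mid\F_{t-1}\bigr]=0 .
\]
The conditional Lindeberg condition therefore holds trivially. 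The nesting condition on the filtrations also holds, since all rows use the single filtration $(\F_t)$.

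\textbf{Conclusion.} The martingale CLT yields $M_n^{(f)}/s_n\Rightarrow\mathcal N(0,1)$. For the equivalent form, write
\[
\frac{M_n^{(f)}}{\sqrt n}=\frac{s_n}{\sqrt n}\cdot\frac{M_n^{(f)}}{s_n}.
\]
Since $s_n/\sqrt n\to\sigma_f$ deterministically, Slutsky's lemma gives $M_n^{(f)}/\sqrt n\Rightarrow\mathcal N(0,\sigma_f^2)$. The same argument run backwards gives the reverse implication, because $\sigma_f>0$.

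\textbf{Main obstacle.} The potentially delicate point is the form of the variance condition. Some versions of the martingale CLT require convergence of $\langle M^{(f)}\rangle_n/s_n^2$ to a constant, while others allow a random limit; here the limit is the constant $1$, so either version applies. The real work was already done in Proposition \ref{het_prop:quadratic-variation}, where the weak law (Theorem \ref{het_thm:WLLN}) and the vanishing biases were used to identify the limit. If one prefers normalizing by $n$ directly, one can use the version requiring $n^{-1}\langle M^{(f)}\rangle_n\to\sigma_f^2$ in probability together with the Lindeberg condition for $D_t^{(f)}/\sqrt n$. The latter also holds trivially by boundedness, and this route avoids needing $\sigma_f^2>0$ except to make the limit nondegenerate.
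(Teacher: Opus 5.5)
Your proposal is correct and follows the paper's proof essentially step for step. You use the same martingale difference array $D_j^{(f)}/s_n$ with the fixed filtration $(\F_j)$, obtain the conditional variance condition from the corollary $\langle M^{(f)}\rangle_n/s_n^2\to 1$ in probability, get the conditional Lindeberg condition trivially for large $n$ from $|D_j^{(f)}|\le 4\|f\|_\infty$ and $s_n\to\infty$, apply the Hall--Heyde martingale CLT, and finish with Slutsky's lemma for the $\sqrt{n}$ normalization.
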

\begin{proof}
    We shall apply the martingale central limit theorem by verifying the two conditions of Corollary 3.1 of \cite{HallHeyde1980}. We define the triangular array $X_{n,j}:={D_j^{(f)}}/{s_n},$ for $j=1,2,\dots,n$, $n \ge 1$. Let the filtration $\F_{n,j}=\F_{j}$ for $j = 1, \ldots , n$, $n \ge 1$ for the triangular array. Since $\E[X_{n,j} \mid \F_{n,j-1}] = \E[D_j^{(f)}\mid \F_{j-1}]/s_n=0$, $(X_{n,j},\F_{n,j})_{j=1}^n$ is a martingale difference array.
    
       First, we verify the conditional variance condition. We have 
    \begin{equation*}
        \sum_{j=1}^n \E[X_{n,j}^2\mid \F_{n,j-1}]=\frac{1}{s_n^2}\sum_{j=1}^n\E[(D_j^{(f)})^2\mid \F_{j-1}]=\frac{\langle M^{(f)}\rangle_n}{s_n^2}, \quad n \ge 1. 
    \end{equation*}
    By Corollary \ref{het_col:sn}, we get the desired condition for the conditional variance. 
    
  Now we verify the conditional Lindeberg condition. Let $\varepsilon>0.$ By Proposition \ref{het_prop:doob_martingale},$|X_{n,j}|={|D_j^{(f)}|}/{s_n}\leq {4\|f\|_{\infty}}/{s_n}$.
    Since $s_n^2/n\to \sigma_f^2>0$, we have $s_n\to\infty$, as $n \to \infty$. Then for all sufficiently large $n$, ${4\|f\|_\infty}/{s_n}\leq \varepsilon$. 
    Thus, for all sufficiently large $n$, $\1_{\{|X_{n,j}|>\varepsilon\}}=0$ for every $1\leq j\leq n$.
    Consequently, $\sum_{j=1}^n \E\left[X_{n,j}^2\1_{\{|X_{n,j}|>\varepsilon\}}\mid \F_{n,j-1}\right] =0$,
    for all sufficiently large \(n\). In particular, the conditional Lindeberg condition holds.
    
      Therefore, by Corollary 3.1 of \cite{HallHeyde1980}, as $n \to \infty$
    \begin{equation*}
        \sum_{j=1}^n X_{n,j}=\frac{M_n^{(f)}}{s_n} \Rightarrow \mathcal N(0,1), 
    \end{equation*}
 and, moreover, 
    by Corollary \ref{het_col:sn} and Slutsky's theorem, ${M_n^{(f)}}/{\sqrt n} = ({M_n^{(f)}}/{s_n})({s_n}/{\sqrt n}) \Rightarrow \mathcal N(0,\sigma_f^2)$, as $n \to \infty$.   
\end{proof}
\subsection{Functional Central Limit Theorem}

\medskip

\noindent We shall show the functional central limit theorem for the rescaled process 
\begin{equation*}
    \mathcal{M}^{(f,n)}(t):= \frac{1}{\sqrt{n}}M_{\lfloor nt\rfloor}^{(f)}, \qquad t\geq 0,\,\, n \ge 1.
\end{equation*}
Since $M^{(f)}$ is a square-integrable martingale, $\mathcal{M}^{(f,n)}$ is a square-integrable c\`adl\`ag martingale with respect to the filtration $(\F_{\lfloor nt\rfloor})_{t\ge0}$. 

\begin{lemma}\label{het_lem:jump}
For every $T>0,$ the jump size $\Delta \calM^{(f,n)} (t)$ of the c\`adl\`ag martingale satisfies  
\begin{equation*}
    J(\calM^{(f,n)}, T):=\sup_{0<t\le T}|\Delta\calM^{(f,n)}(t)| \leq \frac{4\|f\|_\infty}{\sqrt n} \xrightarrow[n\to \infty]{}0.
\end{equation*}
In particular, $\lim_{n\to \infty} \E\left[J( \mathcal M^{(f,n)},T)^2\right]= 0$.   
\end{lemma}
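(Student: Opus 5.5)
The plan is to compute the jumps of the piecewise-constant process $\calM^{(f,n)}$ explicitly and then bound them with Proposition \ref{het_prop:doob_martingale}.

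Since $t\mapsto \lfloor nt\rfloor$ is a right-continuous step function, $\calM^{(f,n)}(t)=n^{-1/2}M^{(f)}_{\lfloor nt\rfloor}$ is constant on each interval $[j/n,(j+1)/n)$. Hence it can jump only at the times $t=j/n$ with $j\ge1$. At such a time the left limit is $n^{-1/2}M^{(f)}_{j-1}$, so
\begin{equation*}
\Delta\calM^{(f,n)}(j/n)=\frac{1}{\sqrt n}\bigl(M^{(f)}_j-M^{(f)}_{j-1}\bigr)=\frac{D^{(f)}_j}{\sqrt n},
\end{equation*}
and $\Delta\calM^{(f,n)}(t)=0$ at every other $t>0$.

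Taking the supremum over $0<t\le T$ therefore reduces to a maximum over $1\le j\le \lfloor nT\rfloor$ of $|D^{(f)}_j|/\sqrt n$. By Proposition \ref{het_prop:doob_martingale}, $|D^{(f)}_j|\le 4\|f\|_\infty$ surely, so $J(\calM^{(f,n)},T)\le 4\|f\|_\infty/\sqrt n$. This bound is deterministic and tends to $0$ as $n\to\infty$.

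For the second claim, I would square this sure bound and take expectations, which gives $\E[J(\calM^{(f,n)},T)^2]\le 16\|f\|_\infty^2/n\to0$.

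I expect no real obstacle. The only point needing care is the identification of the jump times and left limits, which follows from the right-continuity of the floor function.
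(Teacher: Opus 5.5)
Your proposal is correct and follows essentially the same route as the paper: you identify the jumps at times $j/n$ as $D^{(f)}_j/\sqrt n$, bound them surely by $4\|f\|_\infty/\sqrt n$ via Proposition~\ref{het_prop:doob_martingale}, and square that bound to get the $L^2$ statement. Your description of the constancy intervals as half-open, $[j/n,(j+1)/n)$, is slightly more precise than the paper's closed intervals.
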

\begin{proof}
Since the process $\mathcal{M}^{(f,n)}$ is constant in each interval $[\frac{j}{n},\frac{j+1}{n}]$, it jumps only at times $j/n$, $j \ge 1$, and
\begin{equation*}
    \Delta \mathcal M^{(f,n)}(j/n) =\frac{1}{\sqrt n} \left(M_j^{(f)}-M_{j-1}^{(f)}\right) =\frac{D_j^{(f)}}{\sqrt n}.
\end{equation*}
By the bounded increment estimate from Proposition \ref{het_prop:doob_martingale}, $\left|\Delta \mathcal M^{(f,n)}(j/n) \right| \leq {4\|f\|_{\infty}}/{\sqrt{n}}$.

Taking the supremum over $0<t \leq T$ gives
$J(\calM^{(f,n)}, T) \leq {4\|f\|_{\infty}}/{\sqrt{n}}\to 0$.
Squaring and taking expectations yields $\E\left[J( \mathcal M^{(f,n)},T)^2\right]\leq {16\|f\|_{\infty}^2}/{n} \to0$, as $n \to \infty$.
\end{proof}
\begin{lemma}\label{het_lem:quad_var_jump}
    For every \(T>0\), $n \ge 1$
\begin{equation*}
 J(\langle \mathcal M^{(f,n)}\rangle,T)
:=
\sup_{0<t\le T}
\left|
\Delta \langle \mathcal M^{(f,n)}\rangle(t)
\right|
\le
\frac{16\|f\|_\infty^2}{n}.   
\end{equation*}
In particular, $\lim_{n\to \infty} \E\left[J(\langle \mathcal{M}^{(f,n)}\rangle,T)\right]=0$.
\end{lemma}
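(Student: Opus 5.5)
We first identify the predictable quadratic variation of the rescaled martingale. Since $\mathcal M^{(f,n)}(t)=n^{-1/2}M^{(f)}_{\lfloor nt\rfloor}$ is piecewise constant and adapted to $(\F_{\lfloor nt\rfloor})_{t\ge0}$, its predictable quadratic variation is the rescaled discrete one:
\begin{equation*}
\langle \mathcal M^{(f,n)}\rangle(t)=\frac1n\langle M^{(f)}\rangle_{\lfloor nt\rfloor}=\frac1n\sum_{j=0}^{\lfloor nt\rfloor-1}\Var\bigl(f(S_{j+1})\mid\F_j\bigr).
\end{equation*}
This uses Proposition \ref{het_prop:doob_martingale}. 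The right-hand side is a pure-jump, nondecreasing process whose jumps occur only at the times $j/n$, $j\ge1$.

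Next we compute each jump:
\begin{equation*}
\Delta\langle\mathcal M^{(f,n)}\rangle(j/n)=\frac1n\Var\bigl(f(S_j)\mid\F_{j-1}\bigr)=\frac1n\E\bigl[(D_j^{(f)})^2\mid\F_{j-1}\bigr].
\end{equation*}
To bound it, we use the increment bound $|D_j^{(f)}|\le4\|f\|_\infty$ from Proposition \ref{het_prop:doob_martingale}. This gives $\E[(D_j^{(f)})^2\mid\F_{j-1}]\le16\|f\|_\infty^2$. (One could also use the sharper bound $\Var(f(S_j)\mid\F_{j-1})\le\|f\|_\infty^2$, but the stated constant is enough.) The bound is uniform in $j$, so taking the supremum over $0<t\le T$, which covers the jump times $j/n\le T$, gives $J(\langle\mathcal M^{(f,n)}\rangle,T)\le16\|f\|_\infty^2/n$.

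Finally, this deterministic bound yields $\E[J(\langle\mathcal M^{(f,n)}\rangle,T)]\le16\|f\|_\infty^2/n\to0$. The only point that needs care is the first step: checking that the predictable compensator of the càdlàg process really is the piecewise-constant rescaling of $\langle M^{(f)}\rangle$. This holds because $\bigl(\mathcal M^{(f,n)}\bigr)^2-n^{-1}\langle M^{(f)}\rangle_{\lfloor n\cdot\rfloor}$ is a martingale for the time-changed filtration. It also uses the fact that $n^{-1}\langle M^{(f)}\rangle_{\lfloor nt\rfloor}$ is $\F_{\lfloor nt\rfloor-1}$-measurable, hence predictable, since $\langle M^{(f)}\rangle_j$ is $\F_{j-1}$-measurable. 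The rest of the argument is routine.
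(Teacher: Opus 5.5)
Your proof is correct and follows the paper's argument essentially verbatim: you identify $\langle\mathcal M^{(f,n)}\rangle(t)=n^{-1}\langle M^{(f)}\rangle_{\lfloor nt\rfloor}$, read off the jumps $n^{-1}\E[(D_j^{(f)})^2\mid\F_{j-1}]$ at the times $j/n$, and bound them by $16\|f\|_\infty^2/n$ via Proposition~\ref{het_prop:doob_martingale}. Your extra check that this rescaled process really is the predictable compensator fills in a step the paper only asserts. That check rests on $(\mathcal M^{(f,n)})^2-n^{-1}\langle M^{(f)}\rangle_{\lfloor n\cdot\rfloor}$ being a martingale and on $\langle M^{(f)}\rangle_j$ being $\F_{j-1}$-measurable.
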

\begin{proof}
    The proof is similar to Lemma \ref{het_lem:jump}. The predictable quadratic variation of $\langle \mathcal{M}^{(f,n)}\rangle$ is  $   \langle \mathcal M^{(f,n)}\rangle(t)={n}^{-1} \langle M^{(f)}\rangle_{\lfloor nt\rfloor}$.
    Thus its jumps occur only at time $j/n$ and 
    \begin{equation*}
        \Delta \langle \mathcal M^{(f,n)}\rangle(j/n)=\frac{1}{n}\Delta \langle M^{(f)}\rangle_j=\frac{1}{n}\E\left[(D_j^{(f)})^2\mid \F_{j-1}\right].
    \end{equation*}
    By the bounded increment estimate from Proposition \ref{het_prop:doob_martingale}, $\left|\Delta \langle \mathcal M^{(f,n)}\rangle (j/n) \right| \leq {16\|f\|_{\infty}^2}/{n}$.
    Taking the supremum over $0<t\leq T$ gives $J(\langle \mathcal M^{(f,n)}\rangle,T)\leq {16\|f\|_{\infty}^2}/{n}$.
    Therefore, 
    \begin{equation*}
        \E\left[J(\langle \mathcal{M}^{(f,n)}\rangle,T)\right]\leq \frac{16\|f\|_{\infty}^2}{n} \to0,  
    \end{equation*}
    which concludes the proof.
\end{proof}
\begin{lemma}\label{het_lem:quad-var-lim-prob}
    For every \(t\ge0\),
\begin{equation*}
\frac{1}{n}\langle M^{(f)}\rangle_{\lfloor nt\rfloor}\xrightarrow[n \to \infty]{\PP}\sigma_f^2 t.
\end{equation*}
\end{lemma}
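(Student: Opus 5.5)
The plan is to reduce the statement to Proposition \ref{het_prop:quadratic-variation} by a deterministic time change. First I would dispose of the trivial case $t=0$. There $\langle M^{(f)}\rangle_0=0$, so the left-hand side is identically $0=\sigma_f^2\cdot 0$.

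For fixed $t>0$, set $m_n:=\lfloor nt\rfloor$, so that $m_n\to\infty$ and $m_n/n\to t$ as $n\to\infty$. I would then write
\begin{equation*}
\frac{1}{n}\langle M^{(f)}\rangle_{\lfloor nt\rfloor}=\frac{m_n}{n}\cdot\frac{1}{m_n}\langle M^{(f)}\rangle_{m_n}
\end{equation*}
for all $n$ large enough that $m_n\ge1$. The first factor is deterministic and converges to $t$.

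For the second factor, Proposition \ref{het_prop:quadratic-variation} gives $m^{-1}\langle M^{(f)}\rangle_m\to\sigma_f^2$ in probability as $m\to\infty$. Convergence in probability of a sequence indexed by $m$ carries over to any deterministic subsequence-with-repetitions $m_n\to\infty$. Indeed, for each $\varepsilon>0$, the number $\PP(|m^{-1}\langle M^{(f)}\rangle_m-\sigma_f^2|>\varepsilon)$ is small for all $m\ge m_0$, and $m_n\ge m_0$ eventually. Hence $m_n^{-1}\langle M^{(f)}\rangle_{m_n}\to\sigma_f^2$ in probability.

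Finally, the product of a deterministic sequence converging to $t$ and a sequence converging in probability to $\sigma_f^2$ converges in probability to $\sigma_f^2t$, by Slutsky or by the elementary bound
\begin{equation*}
\Bigl|\tfrac{m_n}{n}X_n-t\sigma_f^2\Bigr|\le \tfrac{m_n}{n}|X_n-\sigma_f^2|+\sigma_f^2\Bigl|\tfrac{m_n}{n}-t\Bigr|,
\end{equation*}
where $X_n:=m_n^{-1}\langle M^{(f)}\rangle_{m_n}$. This proves the lemma.

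There is no real obstacle here. The only point needing care is the reindexing from $n$ to $\lfloor nt\rfloor$, which is handled by the deterministic-subsequence observation above. If uniformity in $t\in[0,T]$ were later required for the FCLT, I would add a remark: monotonicity of $t\mapsto\langle M^{(f)}\rangle_{\lfloor nt\rfloor}$ and continuity of the limit $\sigma_f^2t$ upgrade the pointwise convergence to uniform convergence on compacts, via a finite grid argument. The pointwise statement itself needs only the steps above.
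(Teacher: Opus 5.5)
Your proof is correct and matches the paper's argument: dispose of $t=0$, then factor $\frac{\lfloor nt\rfloor}{n}\cdot\frac{1}{\lfloor nt\rfloor}\langle M^{(f)}\rangle_{\lfloor nt\rfloor}$ and apply Proposition~\ref{het_prop:quadratic-variation} along the deterministic sequence $\lfloor nt\rfloor\to\infty$. Your explicit justification of the reindexing and the elementary product bound fill in details the paper leaves implicit.
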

\begin{proof}
    The case $t=0$ is immediate. Fix $t>0.$ Since $\lfloor nt\rfloor\to\infty$ as $n\to\infty,$ Proposition \ref{het_prop:quadratic-variation} gives $({\lfloor nt\rfloor})^{-1} \langle M^{(f)}\rangle_{\lfloor nt\rfloor}\xrightarrow[n\to \infty]{\PP}\sigma_f^2$.
    Also, $\frac{\lfloor nt\rfloor}{n}\to t$, as $n \to \infty$.  Therefore, we have 
\begin{equation*}
\frac1n\langle M^{(f)}\rangle_{\lfloor nt\rfloor}=\frac{\lfloor nt\rfloor}{n}\cdot\frac{1}{\lfloor nt\rfloor}\langle M^{(f)}\rangle_{\lfloor nt\rfloor}\xrightarrow[n \to \infty]{\PP}t\sigma_f^2.
\end{equation*}
\end{proof}

\begin{theorem}
    Assume $\max_{1\le i \le k} p_i<1$ and let $f:Q_k\to\R$ be a bounded function. If $\sigma_f^2>0,$ then in the c\`adl\`ag space $D([0,\infty),\R)$ the functional limit theorem holds, i.e.,  
    \begin{equation*}
        \mathcal{M}^{(f,n)}\Rightarrow \sigma_fB
    \end{equation*}
    where $B=(B_t)_{t\geq0}$ is a standard Brownian motion. 
\end{theorem}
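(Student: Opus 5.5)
The plan is to apply the functional central limit theorem for locally square-integrable martingales in the form of Jacod--Shiryaev (Theorem VIII.3.11, or equivalently Theorem VIII.3.12 / Corollary VIII.3.24 for discrete-time arrays), or the corresponding theorem of Ethier--Kurtz (Theorem 7.1.4). Take the c\`adl\`ag square-integrable martingales $\calM^{(f,n)}$ with respect to $(\F_{\lfloor nt\rfloor})_{t\ge0}$. The limit $\sigma_f B$ is a continuous centered Gaussian martingale with deterministic quadratic variation $C(t)=\sigma_f^2 t$. The theorem then asks for two things. The first is a control of the jumps of $\calM^{(f,n)}$ on compacts, namely $\E[J(\calM^{(f,n)},T)^2]\to0$ or merely $J(\calM^{(f,n)},T)\to0$ in probability. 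The second is convergence in probability of the predictable quadratic variation $\langle\calM^{(f,n)}\rangle(t)\to\sigma_f^2 t$ for each $t$ in a dense subset of $[0,\infty)$. In the Ethier--Kurtz version one also uses $\E[J(\langle \calM^{(f,n)}\rangle,T)]\to0$.

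The steps, in order, are as follows. First I would identify $\langle\calM^{(f,n)}\rangle(t)=n^{-1}\langle M^{(f)}\rangle_{\lfloor nt\rfloor}$. This holds since $\calM^{(f,n)}$ is a time-changed and rescaled discrete martingale whose increments are $D_j^{(f)}/\sqrt n$ at times $j/n$, so the predictable compensator of its square is the rescaled discrete bracket. Second, the jump condition is exactly Lemma \ref{het_lem:jump}, and the continuity of the bracket's jumps is Lemma \ref{het_lem:quad_var_jump}. Third, the quadratic variation condition for every fixed $t\ge0$ is Lemma \ref{het_lem:quad-var-lim-prob}. The hypothesis $\max_i p_i<1$ enters only through Proposition \ref{het_prop:quadratic-variation}. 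The hypothesis $\sigma_f^2>0$ is not really needed for the functional statement; if $\sigma_f=0$ the same argument gives convergence to $0$. We keep it to match the CLT. Finally, the martingale FCLT yields $\calM^{(f,n)}\Rightarrow \sigma_f B$ in $D([0,\infty),\R)$ with the Skorokhod $J_1$ topology, since the initial values $\calM^{(f,n)}(0)=0$ match.

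The main obstacle is only bookkeeping: checking that the version of the martingale FCLT invoked is stated for arrays adapted to $\F_{\lfloor nt\rfloor}$. A second point is that pointwise-in-$t$ convergence in probability of the brackets suffices. Since each $\langle\calM^{(f,n)}\rangle$ is nondecreasing and the limit $\sigma_f^2 t$ is continuous, pointwise convergence upgrades to locally uniform convergence in probability by a standard Dini/P\'olya-type argument on a finite grid. I would include this upgrade explicitly in case the cited theorem requires $\sup_{s\le t}|\langle\calM^{(f,n)}\rangle(s)-\sigma_f^2 s|\to0$ in probability. Concretely, fix a grid of mesh $\eta$ in $[0,T]$ and use monotonicity to bound the supremum by the maximum grid error plus $\sigma_f^2\eta$.
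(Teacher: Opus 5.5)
Your proposal is correct and is essentially the paper's proof. The paper invokes condition (ii) of Whitt's Theorem 2.1, which is the Ethier--Kurtz martingale FCLT stated with the predictable bracket, and verifies exactly your three conditions via Lemmas \ref{het_lem:jump}, \ref{het_lem:quad_var_jump} and \ref{het_lem:quad-var-lim-prob}. Your extra remarks are also correct: the monotone grid argument upgrading pointwise bracket convergence to locally uniform convergence, and the observation that $\sigma_f^2>0$ is not needed. Neither is required for the version of the theorem the paper uses.
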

\begin{proof}
    We shall verify that the three assumptions of condition (ii) of Theorem 2.1 of \cite{Whitt2007} hold.  
    First, by Lemma \ref{het_lem:jump},  $\lim_{n\to \infty} \E\left[J(\mathcal{M}^{(f,n)},T)^2\right]= 0$  for every $T>0.$ Thus the jumps of $\mathcal{M}^{(f,n)}$ are asymptotically negligible.
    Second, by Lemma \ref{het_lem:quad_var_jump}, $\lim_{n\to \infty} \E\left[J(\langle\mathcal{M}^{(f,n)}\rangle,T)\right]=0$  for every $T>0.$ Thus the jumps of the quadratic variation are also asymptotically negligible.
    Finally, Lemma \ref{het_lem:quad-var-lim-prob} yields, for every $t\ge0$, $\langle\mathcal M^{(f,n)}\rangle(t)={n}^{-1}\langle M^{(f)}\rangle_{\lfloor nt\rfloor}\xrightarrow[n \to \infty]{\PP}\sigma_f^2 t$.
 Therefore, all assumptions of condition {\rm(ii)} of Theorem 2.1 in \cite{Whitt2007} are satisfied with the covariance function $c(t)=\sigma_f^2 t$. The theorem implies the weak convergence to the Brownian motion $\mathcal M^{(f,n)} \Rightarrow \sigma_f B$ in $D([0,\infty),\mathbb R)$, as $n \to \infty$, where $B$ is a standard Brownian motion. 

\end{proof}

\section{Some observables on the hypercube} \label{sec:7}
The limiting variance $\sigma^2_f=\pi(v_f)$ for a bounded function $f$ or {\it observable},  where $v_f(x)=K^{\star}f^2(x)-\left(K^{\star}f(x)\right)^2$ does not depend on the parameters $p_1,p_2,\dots,p_k$ in the limit. Indeed, both objects appearing in its definition are independent of the memory parameters: $\pi$ is the uniform measure on $Q_k$ and $K^{\star}$ is the refresh kernel in \eqref{eq:refreshK}, which chooses a coordinate uniformly and refreshes it to an independent symmetric sign: 
\begin{equation} \label{eq:Kstarf}
    K^{\star}f(x)=\dfrac{1}{k}\sum_{i=1}^k\dfrac{f(x^{i\to+1})+f(x^{i\to-1})}{2}, \quad x \in Q_k . 
\end{equation}
The notation $x^{i\to s}$ was defined in Section \ref{sec:2}. We shall evaluate several limiting variances $\sigma_f^2$ for different functions $f$. The summary table is given in Table \ref{tbl:1}.

\subsection{Observable 1: Height}
Let us consider the height function 
\begin{equation*}
    H(x)=\sum_{i=1}^kx^{(i)}, \quad x \in Q_k
\end{equation*}
which represents the height level of a vertex on the hypercube. If coordinate $i$ is refreshed to $\xi$, then the new height is $H(x^{i\to\xi})=H(x)-x^{(i)}+\xi.$ Therefore, for $x \in Q_k$
\begin{equation*}
    K^{\star}H(x)=\dfrac{1}{k}\sum_{i=1}^k \E_{\xi}[H(x)-x^{(i)}+\xi]=\dfrac{1}{k}\sum_{i=1}^k \left(H(x)-x^{(i)} \right) = \dfrac{k-1}{k}H(x).
\end{equation*}
On the other hand,
\begin{equation*}
    K^{\star}H^2(x)=\dfrac{1}{k}\sum_{i=1}^k \E_{\xi}[(H(x)-x^{(i)}+\xi)^2]=\dfrac{1}{k}\sum_{i=1}^k \left[\left(H(x)-x^{(i)} \right)^2+1\right] = \left(1-\dfrac{2}{k} \right)H^2(x)+2
\end{equation*}
for $x \in Q_k$. Thus, we evaluate 
\begin{equation*}
    v_{H}(x)=\left(1-\frac{2}{k}\right)H^2(x)+2-\left(\dfrac{k-1}{k}H(x)\right)^2 = 2-\dfrac{H(x)^2}{k^2}, \quad x \in Q_k. 
\end{equation*}
Finally, averaging under the uniform measure $\pi,$ we get
\begin{equation*}
    \sigma^2_H=\pi(v_H)=2-\dfrac{1}{k^2}\E_{\pi}[H^2]= 2-\dfrac{1}{k}.
\end{equation*}
\subsection{Observable 2: Hamming distance from initial vertex}
Fix $x_0 \in Q_k$, and let 
\begin{equation*}
    D(x)=\sum_{i=1}^k\1_{\{x^{(i)}\neq x_0^{(i)}\}} , \quad x \in Q_k 
\end{equation*}
denote the Hamming distance from the initial vertex of the CERW. Notice that for each coordinate
\begin{equation*}
    \1_{\{x^{(i)}\neq x_0^{(i)}\}} = \dfrac{1-x^{(i)}x_0^{(i)}}{2} , \quad x \in Q_k . 
\end{equation*}
Thus, we can rewrite it as follows.  
\begin{equation*}
    D(x)=\dfrac{k}{2}-\dfrac{1}{2}\sum_{i=1}^kx^{(i)}x_0^{(i)} , \quad x \in Q_k.
\end{equation*}
Define the height relative to $x_0$ as
\begin{equation*}
    H_{x_0}(x):=\sum_{i=1}^kx^{(i)}x_0^{(i)} , \quad x \in Q_k. 
\end{equation*}
Notice that the function $H_{x_0}$ is the usual height applied after the coordinatewise change of variables $x^{(i)}\to x^{(i)}x_0^{(i)}$, which maps $x_0$ to $(1,1,\dots,1).$ Thus, the calculation above gives $\sigma^2_{H_{x_0}}=2-\dfrac{1}{k}.$ Since $D(x)=\frac{k}{2}-\frac{H_{x_0}(x)}{2},$
\begin{equation*}
    \sigma^2_D=\dfrac{1}{4}\sigma^2_{H_{x_0}}= \dfrac{1}{4}\left(2-\dfrac{1}{k} \right).
\end{equation*}
\subsection{Observable 3: Parity}
Let us consider the product function   
\begin{equation*}
    \Pi(x)=\prod_{i=1}^kx^{(i)} , \quad x \in Q_k 
\end{equation*}
which represents the parity of $x.$ Geometrically, $\Pi$ divides the hypercube into two bipartition classes: vertices with $\Pi(x)=1$ and vertices with $\Pi(x)=-1.$ Since adjacent vertices differ in exactly one coordinate, crossing an edge reverses the sign of $\Pi.$ If coordinate $i$ is refreshed to $\xi,$ then 
\begin{equation*}
    \Pi(x^{i\to\xi})=\xi\prod_{j\neq i} x^{(j)} , \quad x \in Q_k.
\end{equation*}
Since $\E[\xi]=0$, we have that $K^{\star}\Pi(x)=0$. On the other hand, since $\Pi(x)^2=1$ for every $x\in Q_k$, then $K^{\star}\Pi^2(x)=1.$ Therefore, $v_{\Pi}(x)=1$ and hence $\sigma^2_{\Pi}=1.$

\subsection{Observable 4: Single coordinate}
Fix a coordinate $i\in \{1,2,\dots,k\}.$ Define $P_i(x)=x^{(i)}.$ Under $K^{\star}$, the coordinate $i$ is refreshed with probability $1/k,$ and left unchanged with probability $(k-1)/k.$ If it is refreshed, its conditional mean is zero. Thus, $K^{\star}P_i(x)=\frac{k-1}{k}x^{(i)}.$ Also, $P^2_i(x)=1,$ which implies $K^{\star}P_i^2(x)=1.$ Therefore, 
\begin{equation*}
    v_{P_i}(x)=1-\left(\frac{k-1}{k}x^{(i)}\right)^2 = 1-\left(\frac{k-1}{k}\right)^2 , 
    \quad x \in Q_k
\end{equation*}
and 
\begin{equation*}
    \sigma_{P_i}^2=1-\left(\frac{k-1}{k}\right)^2 =\frac{2k-1}{k^2}.
\end{equation*}

\subsection{Observable 5: Pair correlation}
Let us consider the pairwise correlation 
\begin{equation*}
    C(x)=\sum_{i<j} x^{(i)}x^{(j)} , \quad x \in Q_k.
\end{equation*}
Since 
\begin{equation*}
    H(x)^2 = \sum_{i=1}^k (x^{(i)})^2 +2\sum_{i<j}x^{(i)}x^{(j)}=k+2C(x) , \quad x \in Q_k,
\end{equation*}
we have 
\begin{equation*}
    C(x)=\frac{H(x)^2-k}{2} , \quad x \in Q_k.
\end{equation*}
Therefore, $v_C(x)=\frac{1}{4}v_{H^2}(x)$ , $x \in Q_k$. We now compute $K^{\star}H^4(x).$ If coordinate $i$ is refreshed to $\xi,$ then
\begin{equation*}
    K^{\star}H^4(x)=\frac{1}{k}\sum_{i=1}^k\E_{\xi}[(H(x)-x^{(i)}+\xi)^4]= \left(1-\frac{4}{k} \right)H^4(x)+\left(12-\frac{16}{k} \right)H^2(x)+8
\end{equation*}
By the height calculations above, we get
\begin{align*}
    v_{H^2}(x)&=\left(1-\frac{4}{k} \right)H^4(x)+\left(12-\frac{16}{k} \right)H^2(x)+8 - \left[\left(1-\frac{2}{k}\right)H^2(x)+2  \right]^2\\ &=-\frac{4}{k^2}H^4(x) +8\left(1-\frac1k\right)H^2(x)+4 , \quad x \in Q_k. 
\end{align*}
Under $\pi,$ $\E_{\pi}[H^2]=k$ and $\E_{\pi}[H^4]=3k^2-2k.$ Hence, 
\begin{equation*}
    \sigma_{H^2}^2= \pi(v_{H^2}(x)) = 8k-16+\frac{8}{k} 
\end{equation*}
and 
\begin{equation*}
    \sigma^2_C=\frac{1}{4}\sigma_{H^2}^2=\frac{2(k-1)^2}{k}.
\end{equation*}
\subsection{Observable 6: Occupation level}
Let us consider the set 
\begin{equation*}
    \calL_r:=\{x\in Q_k: H(x)=k-2r \},
\end{equation*}
which represents the level of vertices with exactly $r$ negative coordinates and define $\ell(x) = \1_{\calL_r}(x).$ Let $R(x)$ denote the number of negative coordinates of $x.$ Then $x\in\calL_r$ if and only if $R(x)=r.$ Under one coordinate refresh, $R(x)$ can change by at most one. If $R(x)=r,$ then after refreshing a coordinate the chain remains in $\calL_r$ exactly when the refreshed coordinate keeps the same sign, which happens with probability $1/2.$ Thus,
\begin{equation*}
    K^{\star}\ell(x)=\frac{1}{2} \qquad \text{if }R(x)=r.
\end{equation*}
If $R(x)=r-1,$ then the chain enters $\calL_r$ by choosing one of the $k-r+1$ positive coordinates and refreshing it to $-1.$ Thus,
\begin{equation*}
    K^{\star}\ell(x)=\frac{k-r+1}{2k} \qquad \text{if }R(x)=r-1.
\end{equation*}
If $R(x)=r+1,$ then the chain enters $\calL_r$ by choosing one of the $r+1$ negative coordinates and refreshing it to $+1.$ Thus,
\begin{equation*}
    K^{\star}\ell(x)=\frac{r+1}{2k} \qquad \text{if }R(x)=r+1.
\end{equation*}
If $R(x)\notin\{r-1,r,r+1\}$, then one coordinate refresh cannot move the chain into $\calL_r,$ so $K^{\star}\ell(x)=0.$ We define
\begin{equation*}
    w_r(a)=\begin{cases}
       \frac{1}{2} & a=r,\\
       \frac{k-r+1}{2k} &a=r-1,\\
       \frac{r+1}{2k} &a=r+1,\\
       0 & \text{otherwise}.
    \end{cases}
\end{equation*}
Then $K^{\star}\ell(x)=w_r(R(x)).$ Since $\ell^2=\ell,$ we have 
\begin{equation*}
    v_\ell(x)=K^{\star}\ell(x)-(K^{\star}\ell(x))^2=K^{\star}\ell(x)(1-K^{\star}\ell(x)) , \quad x \in Q_k. 
\end{equation*}
Under $\pi,$ $R(x)\sim \operatorname{Bin}(k,1/2)$. Hence, we have 
\begin{equation} \label{eq:sigmaell}    \sigma_{\ell}^2=\sum_{a=0}^k\dfrac{\binom{k}{a}}{2^k}w_r(a)(1-w_r(a)).
\end{equation}
\subsection{Observable 7: Vertex indicator}
Fix $v\in Q_k,$ and let $f(x)=\1_{\{x=v\}}$ be the vertex indicator. The reference chain can reach $v$ in one step only if $x=v$ or if the Hamming distance of $x$ and $v$ is $1$. More precisely, 
\begin{equation*}
    K^{\star}(x,\{v\})= 
    \begin{cases}
       \frac{1}{2} & x=v,\\
       \frac{1}{2k} & d_H(x,v)=1,\\
       0 & d_H(x,v)\geq 2. 
    \end{cases}
\end{equation*}
Since $f=f^2,$ $v_f=K^{\star}f(x)(1-K^{\star}f(x))$. Under the invariant measure $\pi,$
\begin{equation*}
    \pi(x:d_H(x,v)=0)=\frac{1}{2^k} \qquad \text{and} \qquad \pi(x:d_H(x,v)=1)=\frac{k}{2^k}.
\end{equation*}
Therefore, 
\begin{equation*}
    \sigma_f^2=\dfrac{1}{2^k}\dfrac{1}{2}\left( 1-\frac{1}{2}\right)+ \dfrac{k}{2^k}\dfrac{1}{2k}\left( 1-\frac{1}{2k}\right) = \frac{3k-1}{4k2^k}.
\end{equation*}
\subsection{Observable 8: Face indicator}
Let 
\begin{equation*}
    F(A,\varepsilon)=\{x\in Q_k: x^{(i)}=\varepsilon_i \forall i \in A \}
\end{equation*}
where $A \subseteq \{1,2,...,k\}$ and $\varepsilon= (\varepsilon_i)_{i\in A} \in \{-1,1\}^A.$
Geometrically, $F(A,\varepsilon)$ is the face obtained by fixing the coordinates indexed by $A$ to the signs $\varepsilon_i,$ while leaving the remaining coordinates free. If $a:=|A|,$ then $F(A,\varepsilon)$ is a $(k-a)$-dimensional subcube of $Q_k,$ containing $2^{k-a}$ vertices. 
Define the face-occupation indicator
\begin{equation*}
    f(x)=\mathbf 1_{\{x\in F(A,\varepsilon)\}} , \quad x \in Q_k.
\end{equation*}
To measure how far a vertex is from satisfying the face constraints, let
\begin{equation*}
d_A(x):=\#\{i\in A:x^{(i)}\neq\varepsilon_i\} , \quad x \in Q_k.
\end{equation*}
Thus, $d_A(x)$ counts the constrained coordinates on which $x$ has the wrong sign. In other words, $d_A(x)$ is the minimum number of constrained coordinates that must be changed in order to move $x$ into the face. We will now analyze the cases:
\smallskip

If $d_A(x)=0,$ then $x\in F(A,\varepsilon)$. Refreshing a coordinate outside $A$ keeps the chain in the face, while refreshing a coordinate inside $A$ keeps it in the face with probability $1/2.$ Hence, $K^{\star}f(x)=1-\frac{a}{2k}$ for $x \in Q_k$. 
\smallskip

If $d_A(x)=1,$ then the chain enters the face only by selecting the unique incorrect coordinate and refreshing it to the correct sign. Thus,  $K^{\star}f(x)=\frac{1}{2k}$ for $x \in Q_k$. 
\smallskip

If $d_A(x)\geq 2,$ then one coordinate refresh cannot fix all incorrect coordinates, and hence $K^{\star}f(x)=0$ for $x \in Q_k$.
\smallskip

Again, since $f=f^2,$ $v_f=K^{\star}f(x)(1-K^{\star}f(x))$. Under $\pi,$ the random variable $d_A(x)$ has distribution $\operatorname{Bin}(a,1/2).$ Therefore,
\begin{equation*}
    \pi(x:d_A=0)=\frac{1}{2^a}, \qquad  \qquad \pi(x:d_A=1)=\frac{a}{2^a}
\end{equation*}
and 
\begin{equation} \label{eq:sigmafcase8}
    \sigma_f^2= \dfrac{a}{2k2^a}\left(2-\dfrac{a+1}{2k} \right).
\end{equation}

\begin{center}
\renewcommand{\arraystretch}{1.6}
\begin{table} [H]
\begin{tabular}{c|l|l|l}
\toprule
\# & Observable  & \(\sigma_f^2\) & Notes \\
\midrule
1 & $H(x)=\sum_i x^{(i)}$ & $2-\frac{1}{k}$ & Height\\
2 & $D(x)=\sum_{i} \1_{\{x^{(i)}\neq x_0^{(i)}\}}$ & $\dfrac{1}{4}\left(2-\frac{1}{k}\right)$ & Distance \\ 
3 & $\Pi(x)=\prod_ix^{(i)}$ & 1 & Parity \\
4 & $x^{(i)}$ & $\dfrac{2k-1}{k^2}$ & Single coordinate \\
5& $C(x)=\sum_{i<j}x^{(i)}x^{(j)}$ & $\dfrac{2(k-1)^2}{k}$& Pair correlation\\
6& $\1_{\{x\in \mathcal{L}_r\}}$ & See formula \eqref{eq:sigmaell} & Occupation level\\
7 & $\1_{\{x=v\}}$ & $\dfrac{3k-1}{4k2^{k}}$ & Vertex indicator\\
8& $\1_{\{x\in F(A,\varepsilon)\}}$ & See formula \eqref{eq:sigmafcase8} & Face indicator\\ \hline 
\end{tabular}
\smallskip 

\caption{Summary of observables and the corresponding asymptotic variance.}\label{tbl:1}
\end{table}
\end{center}
\vspace{-1cm}

\section{Conclusion}
In this paper, we introduced the Coordinate-wise Elephant Random Walk (CERW), a non-Markovian nearest-neighbor walk on the hypercube $Q_k=\{-1,1 \}^k$ in which each coordinate carries its own long-range memory.

The main mechanism isolated in this work is the asymptotic vanishing of coordinate-wise memory biases. When $p_i<1$ for every coordinate, each $\widehat{m}_n^{(i)}$ vanishes almost surely. As a consequence, the random transition kernels $K_n$ of the CERW become asymptotically close in total variation to the memoryless coordinate-refresh kernel $K^{\star}.$ This reduction to the refresh dynamics provides the basis for the long-time analysis: although the walk has long-range memory at finite times, its asymptotic behavior is governed by the geometry of the hypercube and the uniform measure $\pi.$

Using this perturbative comparison together with the uniform ergodicity of $K^{\star}$, we proved a weak law of large numbers for bounded observables on the hypercube. We then studied the associated Doob martingales and established a martingale central limit theorem,  as well as a martingale functional central limit theorem. For these martingale fluctuations, the limiting variance is $\sigma^2_f=\pi(v_f),$ where $\pi$ is the uniform measure on $Q_k$ and $v_f$ is the one-step variance of the observable under the refresh kernel. In particular, this martingale variance depends only on the observable $f,$ the refresh kernel $K^{\star},$ and the uniform measure $\pi,$ and is independent of the memory parameters $p_1,p_2,\dots, p_k.$

The limiting variance is therefore determined entirely by the refresh dynamics. The final section illustrates this principle by computing $\sigma^2_f$ for several natural observables on the hypercube. These examples show how the abstract martingale limit theorem translates into explicit geometric quantities on the hypercube.

\section*{Acknowledgements}
The author is grateful to Professor Tomoyuki Ichiba for his guidance, valuable discussions, and helpful comments, which greatly improved this manuscript. 

\printbibliography
\end{document}